\let\oldwidebar\widebar  
\let \oldwidetilde\widetilde
\let \oldsucc\succ
\let \oldsucceq\succeq
\let \oldprec\prec
\let \oldpreceq\preceq
\let \oldemptyset \emptyset
\let \oldsubset \subset
\let \oldnot \not
\let \oldinfty\infty
\let\widebar\oldwidebar  
\let\widetilde\oldwidetilde
\let \succeq \oldsucceq
\let \succ \oldsucc
\let \preceq \oldpreceq
\let \prec \oldprec
\let \emptyset \oldemptyset
\let \subset \oldsubset
\let \not \oldnot
\let \infty \oldinfty 
\let\NAT@parse\undefined
\newcommand*\rel@kern[1]{\kern#1\dimexpr\macc@kerna}
\newcommand*\widebar[1]{%
  \begingroup
  \def\mathaccent##1##2{%
    \rel@kern{0.8}%
    \overline{\rel@kern{-0.8}\macc@nucleus\rel@kern{0.2}}%
    \rel@kern{-0.2}%
  }%
  \macc@depth\@ne
  \let\math@bgroup\@empty \let\math@egroup\macc@set@skewchar
  \mathsurround\z@ \frozen@everymath{\mathgroup\macc@group\relax}%
  \macc@set@skewchar\relax
  \let\mathaccentV\macc@nested@a
  \macc@nested@a\relax111{#1}%
  \endgroup
}
\title{\LARGE \bf Discounted LQR: stabilizing (near-)optimal state-feedback laws} 
\author{Jonathan de Brusse$^{1,2}$, Jamal Daafouz$^{1,3}$, Mathieu Granzotto$^2$, Romain Postoyan$^1$ and Dragan Ne\v si\'c$^{2}$  
\thanks{*Work funded by the ANR under grant OLYMPIA ANR-23-CE48-0006 and the Discovery Project ARC DP250100300.}
\thanks{$^{1}$ J. de Brusse, J. Daafouz and R. Postoyan are with the Universit\'e de Lorraine, CNRS, CRAN, F-54000, Nancy, France. (emails: \{jonathan.de-brusse, jamal.daafouz,romain.postoyan\}@univ-lorraine.fr). }%
\thanks{$^{2}$ J. de Brusse, D. Ne\v si\'c and M. Granzotto are with the Department of Electrical and Electronic Engineering, University of Melbourne, Parkville, VIC 3010, Australia (emails:
jonathan.debrusse@student.unimelb.edu.au, \{dnesic, mathieu.granzotto\}@unimelb.edu.au).}%
\thanks{$^{3}$ J. Daafouz is also with  Institut Universitaire de France (IUF), F-75000, Paris, France.}
}
\newcommand{\norm}[1]{\left \Vert #1 \right \Vert}
\newcommand{\R}{\mathbb{R}}
\newcommand{\C}{\mathbb{C}}
\newcommand{\Z}{\mathbb{Z}}
\newcommand{\Rlp}{\ensuremath{\mathbb{R}_{>0}}}
\newcommand{\1}{{\bf 1}}
\newcommand{\0}{{\bf 0}}
\newtheorem{thm}{Theorem}
\newtheorem{Example}{Example}
\newtheorem{revexample}{Example 1 revisited}
\newtheorem{rem}{Remark}
\newtheorem{sass}{Standing Assumption}
\begin{document}

\maketitle
\thispagestyle{empty}
\pagestyle{empty}

\begin{abstract} We study deterministic, discrete linear time-invariant systems with infinite-horizon discounted quadratic cost. It is well-known that standard stabilizability and detectability properties are not enough in general to conclude stability properties for the system in closed-loop with the optimal controller when the discount factor is  small. In this context, we first review some of the stability conditions based on the optimal value function found in the learning and control literature and  highlight their conservatism. We then propose novel (necessary and) sufficient conditions, still based on the optimal value function, under which stability of the origin for the optimal closed-loop system is guaranteed. Afterwards, we focus on the scenario where the optimal feedback law is not stabilizing because of the discount factor and the goal is to design an alternative stabilizing near-optimal static state-feedback law. We present both linear matrix inequality-based conditions and a variant of policy iteration to construct such stabilizing near-optimal  controllers. The methods are illustrated via numerical examples.

\end{abstract}

\section{Introduction}\label{sect:introduction}


While the stability properties of the undiscounted linear quadratic regulator (LQR) problem are well-understood, see e.g., \cite{Anderson-Moore-book,Bertsekas-book12(adp),Kucera72b,book-lewis-12}, the case where the cost function involves a discount factor still presents challenges. Even when a linear time-invariant system is stabilizable and satisfies a standard detectability property with respect to the stage cost, the optimal closed-loop system, i.e., the system in closed-loop with the optimal feedback law, may be unstable because of the discount factor \cite[Example 1]{Postoyan17}, see also \cite{GAIT2018} and \cite{granzotto-tac24(stochastic)} for  stochastic results. It is shown in \cite[Corollary 3]{Postoyan17} that stability is always guaranteed in this context provided that the discount factor $\gamma$ is sufficiently close to $1$. When this is not the case, namely when $\gamma$ is too small, a common technique consists in iteratively increasing the discount factor so that it is eventually close or equal to $1$, see, e.g., \cite{perdomo2021,Agazzi2020,Lamperski_2020,zhao2021}. 
Still, there are scenarios where the discount factor should be fixed and not necessarily close to $1$ because of the problem at hand, or because having a small discount factor allows working with limited amount of data in a learning setting \cite{Tyler22}, and yet ensuring the stability of the closed-loop system is essential. Two questions then naturally arise:
\begin{enumerate}
\item[1)] When is stability guaranteed for the discounted LQR problem for a given value of $\gamma$, possibly small?
\item[2)] If the optimal feedback law is not stabilizing, how to construct a near-optimal stabilizing state-feedback law instead?
\end{enumerate}
The objective of this work is to provide answers to both questions for deterministic, discrete linear time-invariant systems.

Question 1) admits a simple answer: it suffices to evaluate the eigenvalues of the optimal closed-loop system state matrix. However, it is  important to also have at our disposal stability  conditions involving the optimal value function. This is particularly relevant in a learning context where we seek to approximate or learn the optimal value function and use it to certify stability \cite{Tyler22,LAI2023,zhao2021}. We show in this work that the existing conditions are subject to conservatism in the sense that they unnecessarily require $\gamma$ to be close to $1$. To overcome these limitations, we present a novel relaxed sufficient condition that captures the one in \cite{Tyler22, LAI2023} as a special case, thereby reducing conservatism in the range of $\gamma$ values that satisfy it. We then show that a necessary and sufficient condition can be obtained if we add a  quadratic term to the optimal value function to obtain a Lyapunov function for the optimal closed-loop system. 
Similar techniques have been used in other optimal control problems, see, e.g., \cite{Grimm-et-al-tac2005,Postoyan17,granzotto-tac24(stochastic)}. However, in all these references the extra term used to build a Lyapunov function is positive semi-definite, while the extra term that we propose may not be sign definite, which is key to making our condition both necessary and sufficient. 

Afterwards, we address question 2).  We derive for this purpose convex optimization problems with LMI constraints, which allow constructing a near-optimal stabilizing state-feedback law. In particular, we present sufficient conditions under which a stabilizing state-feedback can be constructed and whose cost is guaranteed to be less than a given constant for a given initial condition, like in e.g., \cite[Chapters 7 and 10]{Boyd94} and more recently in a data-driven context in \cite{van-waarde-mesbahi-ifac2020data} where similar near-optimal controller designs can be found for undiscounted costs. We further provide LMI conditions, which allow synthesizing a stabilizing state-feedback law that is independent of the considered initial condition this time and with a guaranteed margin on the mismatch with the optimal, but not stabilizing, gain. We finally provide a variant of policy iteration (PI) algorithm that enforces stability at each iteration while ensuring cost improvement. Interestingly, this new algorithm shares similarities with policy gradient methods \cite{Fazel_2018} as we show. All the results are illustrated via several numerical examples for which  closed-loop system stability is ensured  without significantly degrading performance with respect to the  optimal one.

The remainder of the paper is structured as follows. Section \ref{sect:notation} introduces the used notation. Section \ref{secLQRopt} formally states the problem. In Section \ref{sec: Lyap analysis}, we present novel optimal value function-based conditions ensuring  stability for discounted LQR. Section \ref{sect:near-optimal} focuses on the design of near-optimal stabilizing state-feedback laws. Conclusions and  perspectives are discussed in Section \ref{sect:conclusions}.

\section{Notation}\label{sect:notation}

Let $\C$ be the set of complex numbers, $\R$ the set of real numbers, $\Z_{\ge0}$ the set of non-negative integers including $0$ and $\Z_{>0}$ the set of strictly positive integers. The symbol $\R^{n\times m}$ stands for the set of real matrices with $n\in \Z_{>0}$ rows and $m\in \Z_{>0}$ columns. For a symmetric matrix $Q\in \R^{n\times n}$, we write $Q\succ 0$ ($Q\succeq0$) if $Q$ is positive (semi-)definite. The identity matrix is denoted by \1 and the zero matrix by \0, whose dimensions depend on the context. The symbol  $(\bullet)$ in a matrix stands for the symmetric term, i.e., $\left[\begin{smallmatrix}
    A & B\\
    B^{\top} & C
\end{smallmatrix}\right]=\left[\begin{smallmatrix}
    A & (\bullet)^{\top}\\
    B^{\top} & C
\end{smallmatrix}\right]$. For any $M\in \R^{n \times m}$ with $n,m\in \Z_{>0}$, the Frobenius norm of $M$ is denoted by $\norm{M}$. For any square matrix $M\in \R^{n\times n}$ with $n\in \Z_{>0}$, the trace of $M$ is denoted by $\text{trace}(M)$ and the spectral radius of $M$ by $\rho(M)$. For any $\lambda\in \C$, $\lambda^*$ denotes its complex conjugate. For an infinite sequence $\boldsymbol{u}:=(u_0,u_1,\dots)$ where $u_0,u_1,...\in \R^{m}$ with $m\in \Z_{>0}$, $\boldsymbol{u}|_k$ stands for the truncation of $\boldsymbol{u}$ to the first $k\in \Z_{> 0}$ elements, i.e., $\boldsymbol{u}|_k:=\big(u_0,\dots,u_{k-1}\big)$ and we use the convention $\boldsymbol{u}|_0=\emptyset$ when $k=0$. 

\section{Problem statement}\label{secLQRopt}

\subsection{Discounted LQR}
Consider the deterministic, discrete-time linear time-invariant system
\begin{equation}
{x}_{k+1} = Ax_k + Bu_{k},
	\label{eq:sysd}
\end{equation}
where $x_k \in \R^n$ and $u_k\in \R^m$ are the state and control input at time $k\in \Z_{\ge0}$, respectively, with $n,m\in\Z_{>0}$.  Matrices $A$ and $B$ are real, of appropriate dimensions and such that the next assumption holds.

\begin{sass}[SA\ref{sass:stabilizability}]\label{sass:stabilizability} The pair $(A,B)$ is stabilizable.\hfill $\Box$
\end{sass}

Necessary and sufficient conditions under which  SA\ref{sass:stabilizability} holds are provided in \cite[Lecture 14]{Hespanha-book-2009}. The discounted (LQR) problem consists of determining, for any given initial state $x_0 \in \R^n$, an infinite length sequence of inputs $\boldsymbol{u}=(u_k)_{k\in \Z_{\ge0}}\in(\R^{m})^{\Z_{\geq 0}}$ that minimizes the cost function
\begin{equation}
J_{\gamma} (x_0, \boldsymbol{u}) = \sum_{k=0}^{\infty} \gamma ^k \big[ \phi(k,x_0,\boldsymbol{u}|_{k})^{\top} Q\phi(k,x_0,\boldsymbol{u}|_{k}) + u_k^{\top} Ru_k \big]
	\label{eq:costd}
\end{equation}
where the discount factor $\gamma\in[0,1]$ is \emph{fixed} and  $\phi(k,x_0,\boldsymbol{u}|_k)$ denotes the state obtained at time $k\in \Z_{\ge0}$ by applying the sequence of inputs $\boldsymbol{u}|_k$ from the initial state $x_0$ to system (\ref{eq:sysd}). We use the convention $\phi(0,x_0,\boldsymbol{u}|_0)=x_0$. We assume that $Q$ and $R$  satisfy the next assumption.

\begin{sass}[SA\ref{sass:observability}]\label{sass:observability} There exist real matrices $C$ and $D$ such that $Q=C^{\top}C \succ 0$ and $R=D^{\top}D \succ 0$.\hfill $\Box$
\end{sass}

SA\ref{sass:observability} implies that $(A,C)$ is observable. The optimal controller, also called \textit{optimal policy}, which minimizes \eqref{eq:costd} is given by the state feedback law  $h^{\star}_{\gamma}(x)= K_{\gamma} x$ for any $x\in \R^{n}$ (see e.g., \cite[Section 4.2]{Bertsekas-book12(adp)}), where
\begin{equation}
K_{\gamma} := -\gamma  (R+\gamma  B^{\top}P_{\gamma}B)^{-1}B^{\top}P_{\gamma}A
	\label{eq:Kopt}
\end{equation}
and  $P_{\gamma}=P_{\gamma}^\top \succ 0$ is the solution to the discrete-time algebraic Riccati equation 
\begin{equation}\label{eq:Ricc}
P _{\gamma}= \gamma  A^{\top}P_{\gamma}A - \gamma^2  A^{\top}P_{\gamma}B(R+\gamma  B^{\top}P_{\gamma}B)^{-1}B^{\top}P_{\gamma}A+Q.
\end{equation}
The optimal cost at a given initial state $x_0\in \R^{n}$ is then given by 
\begin{equation}
\begin{array}{rlll}
V^{\star}_{\gamma}(x_0):=\displaystyle \min_{\boldsymbol{u}}J_{\gamma}(x_0,\boldsymbol{u})=J_{\gamma}(x_0,K_{\gamma})=x_0^\top P_{\gamma}x_0,
\end{array}\label{eq:optimal-value-function}
\end{equation}
where $J_{\gamma}(x_0,K_{\gamma})$ represents the cost induced by the repeated application of the optimal policy $h_{\gamma}^{\star}(x)=K_{\gamma}x$ at each encountered state to (\ref{eq:sysd}). As shown in \cite[Theorem~8]{Kucera72b}, the stabilizability of the pair $(\sqrt{\gamma}A, \sqrt{\gamma}B)$, which follows from SA\ref{sass:stabilizability},   and the detectability of the pair $(\sqrt{\gamma}A,C)$, which follows from SA\ref{sass:observability}, are necessary and sufficient conditions for the Riccati equation \eqref{eq:Ricc} to have a unique symmetric, positive definite  solution. 
Moreover, as $Q\succ 0$ by SA\ref{sass:observability} and $V^{\star}_{\gamma}(x)=J_{\gamma}(x,K_{\gamma})\ge \ell(x,K_{\gamma}x)=x^{\top}(Q+K_{\gamma}^{\top}RK_{\gamma})x$ for any $x\in \R^{n}$, we derive that 
\begin{equation}\label{eq:P_gamma-greater-Q}
P_{\gamma}\succeq Q \succ  0.
\end{equation}

In the following, we recall that SA\ref{sass:stabilizability} and SA\ref{sass:observability} alone do not always guarantee the stability of the optimal closed-loop system, i.e., system (\ref{eq:sysd}) with input $u=K_\gamma x$.

\subsection{Challenges with stability}
The next example illustrates that SA\ref{sass:stabilizability} and SA\ref{sass:observability} are not sufficient to guarantee the stability of the optimal closed-loop system for all $\gamma\in [0,1]$. 
\begin{Example}\label{Ex: 1}
We consider $A =\big[\begin{smallmatrix}
    -0.97 & 0 \\
    3.88 & 0.97
\end{smallmatrix}\big]$, $B = \big[\begin{smallmatrix}
    2\\ -1
\end{smallmatrix}\big]$, $Q =\big[\begin{smallmatrix}
    2 & 0 \\ 0 & 3
\end{smallmatrix}\big]$ and $R=5$. We take 
 $C=\left[\begin{smallmatrix}
    \sqrt{2}&0\\
    0& \sqrt{3}\\
\end{smallmatrix}\right]$ and $D=\sqrt{5}$ so that SA\ref{sass:stabilizability} and SA\ref{sass:observability} hold. The evolution of the spectral radius of the matrix $A+BK_{\gamma}$ as a function of $\gamma$  is depicted in Fig.~\ref{fig:enter-label}. As we can observe, the optimal closed-loop system is unstable when $\gamma\in [0.02, 0.12]$. 
More precisely, stability is guaranteed for $\gamma\in[0,0.02)$, before being lost  by increasing $\gamma$ in the interval $[0.02,0.12]$ after which it is recovered for $\gamma\in(0.12,1]$. \hfill $\Box$
\begin{figure}[ht]
    \centering
    \includegraphics[width=8cm, height=5cm]{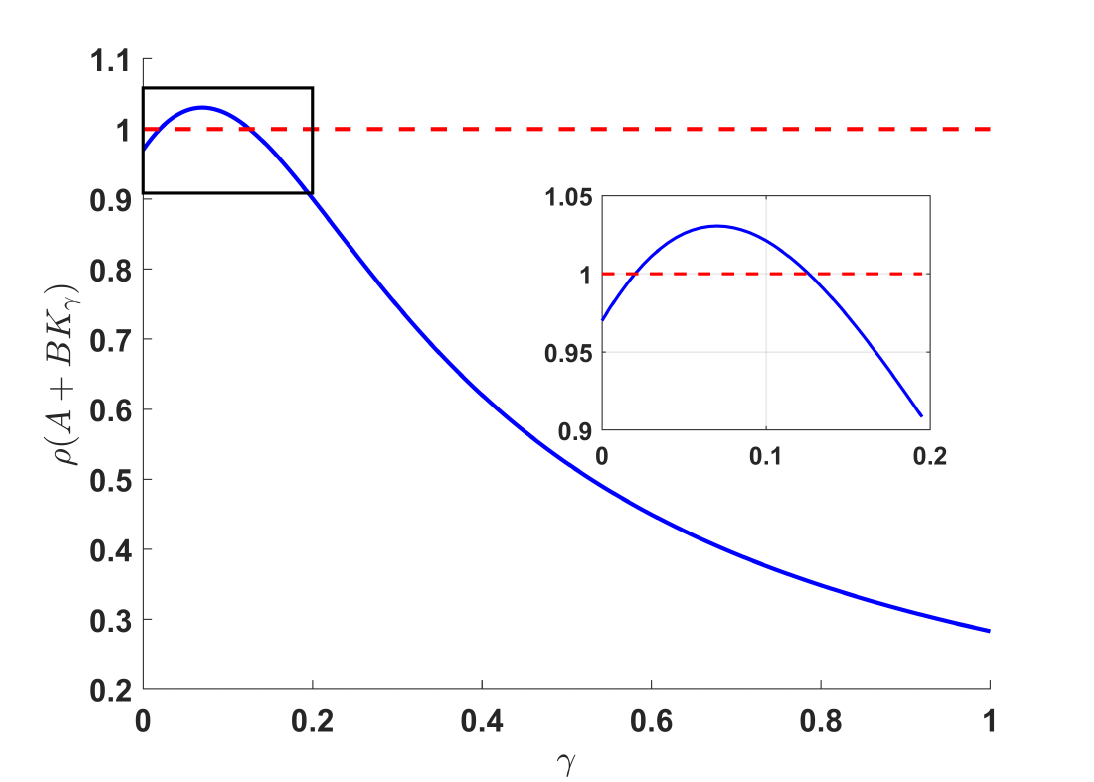}
    \caption{ Spectral radius of $A+BK_{\gamma}$ over the interval $[0,1]$, with a zoom on   $\gamma\in[0,0.2]$ for Example~\ref{Ex: 1}.}
    \label{fig:enter-label}
\end{figure}
\end{Example}

To clarify this phenomenon, we first introduce some relevant notations. Given any $\gamma\in [0,1]$, ${\cal K}_{\gamma}$ denotes the set of all state feedback control gains $K\in\R^{m\times n}$ such that the matrix $ \sqrt{\gamma} (A + BK)$  is Schur, i.e., 
\begin{equation}
\begin{array}{rllll}
\mathcal{K}_{\gamma} & := & \{K\in\R^{m\times n}\,:\,\rho\left(\sqrt{\gamma}(A+BK)\right)<1\}.
\end{array}
\end{equation}
It is important to notice that  $\sqrt{\gamma}(A+BK_\gamma)$ is \emph{not} the state matrix of the optimal closed-loop system, whenever  $\gamma\in[0,1)$. 

As $\gamma\in [0,1]$, we derive that $\mathcal{K}_{1}\subset \mathcal{K}_{\gamma}$, however $\mathcal{K}_{\gamma}\not\subset \mathcal{K}_{1}$ in general. Indeed,  
 the optimal control gain $K_{\gamma}\in {\cal{K}}_{\gamma}$ and $P_{\gamma}$ are solutions to the Lyapunov equation 
\begin{equation}\label{eq:clsdgam}
\small
(\sqrt{\gamma}(A+BK_{\gamma}))^\top P_{\gamma}(\sqrt{\gamma}(A+BK_{\gamma}))-P_{\gamma}+{K_{\gamma}}^\top R K_{\gamma}+Q =0.
\end{equation}
Thus, the optimal value function $V^{\star}_{\gamma}$ in (\ref{eq:optimal-value-function}) 
is a Lyapunov function for the system
\begin{equation}\label{eq:sys-sqrt-gamma}
x_{k+1} = \sqrt{\gamma}(A+BK_{\gamma})x_k.
\end{equation}
However, there is a priori no reason for $V_{\gamma}^{\star}$ to also be Lyapunov function for the optimal closed-loop system, i.e., system (\ref{eq:sysd}) with $u_k=K_\gamma x_k$, which corresponds to
\begin{equation}\label{eq:sys-optimal-closed-loop}
x_{k+1} = (A+BK_{\gamma})x_k.
\end{equation}
In other words, although $K_{\gamma}  \in {\cal K}_{\gamma}$, there is no guarantee that $K_{\gamma} \in {\mathcal K}_1$, which means that the optimal feedback law may not stabilize system (\ref{eq:sysd}) as recalled in Example \ref{Ex: 1}.

\subsection{Objectives}

The first objective of this work is to develop conditions based on the optimal value function that allows assessing the stability of the optimal closed-loop system (\ref{eq:sys-optimal-closed-loop}), thereby providing alternative to the eigenvalue test to answer question 1) in the introduction. This is the purpose of Section \ref{sec: Lyap analysis}.

The second objective is to design near-optimal state-feedback laws that ensure the stability of system (\ref{eq:sysd})  when the optimal controller $u=K_\gamma x$ is not stabilizing for system (\ref{eq:sysd}).  This is addressed in Section \ref{sect:near-optimal}.


\section{Stability analysis based on the optimal value function}\label{sec: Lyap analysis}
We begin by revisiting a condition previously proposed in the literature, see, e.g., \cite{Tyler22, LAI2023}, which is stated as follows
\begin{equation}\label{eq:cnsK}
Q+(\gamma -1) P_{\gamma} \succ 0.
\end{equation}
This condition guarantees that the optimal value function $V^{\star}_{\gamma}$ is a \emph{common} Lyapunov function, in the sense that it is a Lyapunov function for both systems (\ref{eq:sys-sqrt-gamma}) and (\ref{eq:sys-optimal-closed-loop}), thereby  ensuring that $K_{\gamma}  \in {\cal K}_{1}$.
Indeed, we have seen above that $K_{\gamma}$ given in (\ref{eq:Kopt}) and $P_{\gamma}$ solution to (\ref{eq:Ricc}) verify \eqref{eq:clsdgam}, which can be written as 
	\begin{equation}\label{eq:clsd}
    \small
	(A+BK_{\gamma})^\top \gamma{P}_{\gamma} (A+BK_{\gamma})-\gamma{P}_{\gamma}+K_{\gamma}^\top R K_{\gamma}+Q+ (\gamma -1) P_{\gamma}=0.
	\end{equation}
Condition \eqref{eq:cnsK} ensures that $ (A+BK_{\gamma})^\top \gamma P_\gamma(A+BK_{\gamma})-\gamma P_\gamma$  is negative definite, thereby making $V^{\star}_{\gamma}$ a common Lyapunov function. 

We argue that condition (\ref{eq:clsd}) is subject to some conservatism. Indeed, we know that $P_{\gamma} \succeq Q$  by (\ref{eq:P_gamma-greater-Q}) and  condition \eqref{eq:cnsK} is equivalent to $ P_{\gamma}  \prec \frac{1}{1-\gamma}Q$. Thus, $P_\gamma$ is constrained within bounds determined by $Q$, which makes  \eqref{eq:cnsK}  restrictive in general. 
The next theorem provides an alternative, relaxed condition, which does not rely on a Lyapunov construction.

\begin{thm}\label{thKuc}
Given any fixed $\gamma \in [0, 1]$, consider the optimal gain $K_{\gamma}$ in (\ref{eq:Kopt}) and the solution $P_{\gamma}$ to \eqref{eq:Ricc}. 
Then, $K_{\gamma} \in {\cal K}_1$ if the following holds   
\begin{equation}\label{eqKuc}
\gamma ^2P_{\gamma} BR^{-1}B^\top P_{\gamma} + Q + (\gamma -1) P_{\gamma} \succ 0.
\end{equation}
\hfill $\Box$
\end{thm}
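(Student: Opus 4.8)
The plan is to turn the closed-loop identity \eqref{eq:clsd} into a bona fide discrete-time Lyapunov inequality for $A+BK_{\gamma}$, using a \emph{modified} quadratic matrix in place of $P_{\gamma}$. The case $\gamma=0$ is vacuous, since \eqref{eqKuc} then reduces to $Q-Q=0$, which is not positive definite; so I assume $\gamma\in(0,1]$ throughout.

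The key step is to record the ``feedback form'' of the optimal gain,
\[
K_{\gamma} = -\gamma R^{-1}B^{\top}P_{\gamma}(A+BK_{\gamma}),
\]
which follows from \eqref{eq:Kopt} by left-multiplying with $R+\gamma B^{\top}P_{\gamma}B$ and rearranging. From it, $K_{\gamma}^{\top}RK_{\gamma}=\gamma^{2}(A+BK_{\gamma})^{\top}P_{\gamma}BR^{-1}B^{\top}P_{\gamma}(A+BK_{\gamma})$. Substituting this into \eqref{eq:clsd} and grouping the terms that are quadratic in $A+BK_{\gamma}$ yields
\[
(A+BK_{\gamma})^{\top}N(A+BK_{\gamma}) - \gamma P_{\gamma}+Q+(\gamma-1)P_{\gamma}=0,\qquad N:=\gamma P_{\gamma}+\gamma^{2}P_{\gamma}BR^{-1}B^{\top}P_{\gamma}.
\]
Because $N-\big(\gamma^{2}P_{\gamma}BR^{-1}B^{\top}P_{\gamma}+Q+(\gamma-1)P_{\gamma}\big)=\gamma P_{\gamma}-Q-(\gamma-1)P_{\gamma}=P_{\gamma}-Q$, this rearranges to
\[
(A+BK_{\gamma})^{\top}N(A+BK_{\gamma}) - N = -\big(\gamma^{2}P_{\gamma}BR^{-1}B^{\top}P_{\gamma}+Q+(\gamma-1)P_{\gamma}\big).
\]

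To finish, I would appeal to the discrete-time Lyapunov theorem: $P_{\gamma}\succ 0$ by \eqref{eq:P_gamma-greater-Q} together with $\gamma>0$ gives $N\succ 0$, while hypothesis \eqref{eqKuc} says precisely that the right-hand side of the last identity is negative definite. Hence $x\mapsto x^{\top}Nx$ strictly decreases along $x_{k+1}=(A+BK_{\gamma})x_{k}$, so $\rho(A+BK_{\gamma})<1$, i.e.\ $K_{\gamma}\in\mathcal{K}_{1}$. The main (really the only) obstacle is spotting the feedback-form identity: it is what lets $K_{\gamma}^{\top}RK_{\gamma}$ be absorbed into the quadratic term so that exactly the matrix $\gamma^{2}P_{\gamma}BR^{-1}B^{\top}P_{\gamma}$ of \eqref{eqKuc} emerges; everything afterwards is bookkeeping. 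Alternatively, one can skip $N$ and argue eigenvalue-by-eigenvalue: for an eigenpair $(\lambda,v)$ of $A+BK_{\gamma}$ the feedback identity gives $K_{\gamma}v=-\gamma\lambda R^{-1}B^{\top}P_{\gamma}v$, and pre- and post-multiplying \eqref{eq:clsd} by $v^{*}$ and $v$ produces a closed-form expression for $|\lambda|^{2}$ that is $<1$ exactly when \eqref{eqKuc} holds (tested on $v$).
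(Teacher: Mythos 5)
Your proof is correct, but it takes a genuinely different route from the paper's. The paper argues by contradiction on an eigenpair of $A+BK_{\gamma}$: it combines the Riccati identity in the form $\gamma A^{\top}P_{\gamma}(A+BK_{\gamma})=P_{\gamma}-Q$ with the feedback form $A+BK_{\gamma}=A-\gamma BR^{-1}B^{\top}P_{\gamma}(A+BK_{\gamma})$, evaluates both on an eigenvector with $|\lambda|\geq 1$, and derives a scalar identity that contradicts \eqref{eqKuc}; notably, the paper emphasizes that this argument does \emph{not} rely on a Lyapunov construction. You use the same two algebraic ingredients (the closed-loop identity \eqref{eq:clsd} and the feedback form of $K_{\gamma}$, which lets you write $K_{\gamma}^{\top}RK_{\gamma}=\gamma^{2}(A+BK_{\gamma})^{\top}P_{\gamma}BR^{-1}B^{\top}P_{\gamma}(A+BK_{\gamma})$), but you assemble them into an explicit Lyapunov inequality for $N=\gamma P_{\gamma}+\gamma^{2}P_{\gamma}BR^{-1}B^{\top}P_{\gamma}$, whose decrement is exactly the matrix in \eqref{eqKuc}; the bookkeeping checks out, and your handling of $\gamma=0$ (where $P_{0}=Q$ makes \eqref{eqKuc} infeasible, so the statement is vacuous and the requirement $N\succ0$ is never an obstruction) is sound. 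What your route buys is a unification with Theorem~\ref{thcns}: your $N$ is precisely $\gamma P_{\gamma}+X$ with $X=\gamma^{2}P_{\gamma}BR^{-1}B^{\top}P_{\gamma}\succeq 0$, and with this choice the necessary-and-sufficient condition \eqref{eq:condcns} collapses exactly to \eqref{eqKuc}, so Theorem~\ref{thKuc} becomes a corollary of Theorem~\ref{thcns}. What the paper's eigenvector route buys instead is that it tracks the factor $|\lambda|^{2}$ explicitly, making the source of the slack (the inequality $|\lambda|^{2}\geq1$) visible, and it avoids committing to any particular Lyapunov certificate.
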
 

\begin{proof} Let $\gamma\in[0,1]$. We draw inspiration from \cite[Theorem~7]{Kucera72b}, which states that the detectability condition is both necessary and sufficient for the solution of the discrete-time Riccati equation (\ref{eq:Ricc}) to be the unique, symmetric, positive definite solution. We begin by observing that \eqref{eq:Ricc} is equivalent to 
\begin{equation}
\gamma A^\top P_{\gamma} (A - \gamma B(R + \gamma B^{\top}P_{\gamma} B)^{-1}B^\top P_{\gamma} A) = P_{\gamma} - Q 
\end{equation}
which simplifies to, in view of (\ref{eq:Kopt}),
\begin{equation}\label{eqpr1}
\gamma A^{\top}P_{\gamma}(A+BK_{\gamma}) = P_{\gamma} - Q.
\end{equation}
Next, consider the optimal closed-loop state matrix $A+BK_{\gamma}$. By applying standard algebraic manipulations, including the matrix inverse lemma, we derive 
\begin{equation}\label{eqpr2}
A+BK_{\gamma} = A - \gamma B R^{-1}B^\top P_{\gamma}(A+BK_{\gamma}).
\end{equation}
The goal is to prove that $A+BK_\gamma$ is Schur. For this purpose, we proceed by contradiction and assume that there exists an eigenvalue of $A+B K_\gamma$ outside the interior of the unit disk, i.e., there exist $\lambda \in \C $ with $|\lambda|\geq 1$ and $z\in\R^{n}\backslash\{0\}$ such that
\begin{equation}\label{eq:proof-eigenvector}
(A+BK_{\gamma})  z = \lambda z.
\end{equation}
Then by \eqref{eqpr1},
\begin{equation}
\begin{array}{rllll}
\gamma   z^{\top} (A+BK_{\gamma})^\top P_{\gamma} A z & = &  z^{\top} P_{\gamma}z - z^{\top}Q z,
\end{array}\label{eq:proof-suff-cond-stab-eq1}
\end{equation}
and, applying (\ref{eq:proof-eigenvector}), we obtain
\begin{equation}
\begin{array}{rllll}
\gamma \lambda ^*  z^{\top}  P_{\gamma} A z & = & z^{\top} P_{\gamma}z - z^{\top}Q z.
\end{array}\label{eq:proof-contradiction1}
\end{equation}
On the other hand, by  \eqref{eqpr2},
\begin{equation}
\begin{array}{rllll}
    \gamma \lambda ^* z^{\top} P_{\gamma}A z &=& \gamma \lambda ^* z^{\top} P_{\gamma}(A+BK_{\gamma}) z \\
& &+ \gamma ^2 \lambda ^* z^{\top} P_{\gamma}BR^{-1}B^{\top}P_{\gamma} (A+BK_{\gamma}) z,
\end{array}\label{eq:proof-suff-cond-stab-eq2}
\end{equation}
which becomes, by invoking (\ref{eq:proof-eigenvector}) again,
\begin{equation}
\begin{array}{rllll}
\gamma \lambda ^*  z^{\top}  P_{\gamma} A z & = & \gamma \lambda ^* \lambda z^{\top} P_{\gamma}  z \\ 
&& + \gamma ^2 \lambda ^* \lambda z^{\top} P_{\gamma}BR^{-1}B^{\top}P_{\gamma}  z.
\end{array}\label{eq:proof-contradiction2}
\end{equation}
We derive from (\ref{eq:proof-contradiction1}) and (\ref{eq:proof-contradiction2}) that 
\begin{equation}\label{eqzero}
z^{\top} ( \gamma ^2 \lambda ^* \lambda  P_{\gamma} BR^{-1}B^\top P_{\gamma} + Q - P_{\gamma}  + \gamma \lambda ^* \lambda  P_{\gamma} ) z = 0.
\end{equation}
Finally, since $  \lambda ^* \lambda =|\lambda|\geq 1$, we have
\begin{multline}\label{eqimpo}
 \gamma ^2 \lambda ^* \lambda  P_{\gamma} BR^{-1}B^\top P_{\gamma} + Q - P_{\gamma}  + \gamma \lambda ^* \lambda  P_{\gamma}  \\
 \succeq   \gamma ^2P_{\gamma} BR^{-1}B^\top P_{\gamma} + Q + (\gamma -1) P_{\gamma}.
\end{multline}
We deduce from (\ref{eqKuc}) that 
\begin{equation}
\begin{array}{rllll}
 \gamma ^2 \lambda ^* \lambda  P_{\gamma} BR^{-1}B^\top P_{\gamma} + Q - P_{\gamma}  + \gamma \lambda ^* \lambda  P_{\gamma}  
 \succ 0.
\end{array}
\end{equation}
This last inequality contradicts (\ref{eqzero}) as $z\neq 0$. Hence, we have proved that $\rho(A+BK_\gamma)<1$, i.e., $K_{\gamma}\in\mathcal{K}_1$, which concludes the proof.
\end{proof}

Theorem~\ref{thKuc} provides a less conservative condition to ensure that $K_{\gamma} \in {\cal K}_1$ compared to (\ref{eq:cnsK}) as it covers it as a special case. To see it, note that $\gamma ^2P_{\gamma} BR^{-1}B^\top P_{\gamma} \succeq 0$ for any $\gamma\in [0,1]$, which implies that $\gamma ^2P_{\gamma} BR^{-1}B^\top P_{\gamma} + Q + (\gamma -1) P_{\gamma}\succeq Q + (\gamma -1) P_{\gamma}$. Therefore, the range of $\gamma$ values that satisfy \eqref{eqKuc} always includes  the range of $\gamma$ values satisfying \eqref{eq:cnsK}, with both being equal when  $\gamma ^2P_{\gamma} BR^{-1}B^\top P_{\gamma}=0$.

Another sufficient stability condition found in the literature  \cite[(7)]{zhao2021} is given by
\begin{equation}
    \label{eq:LyapKRK}
    K_{\gamma}^{\top}RK_{\gamma}+Q+(\gamma-1)P_{\gamma}\succ0.
\end{equation}
Condition (\ref{eq:LyapKRK})  also guarantees that $V^{\star}_{\gamma}$ is a common Lyapunov function. However, it is unclear whether the satisfaction of (\ref{eq:LyapKRK}) ensures that \eqref{eqKuc} holds. We show below that  (\ref{eq:LyapKRK}) imposes a much smaller range of $\gamma$ than (\ref{eqKuc}) for Example \ref{Ex: 1}. In any case, all the stability conditions presented so far are  sufficient. To obtain necessary and sufficient conditions, the next theorem establishes that the optimal value function $V^{\star}_\gamma$ in (\ref{eq:optimal-value-function}) needs to be modified to become a Lyapunov function by adding a suitable  quadratic term. 
\begin{thm}\label{thcns}
Given any fixed $\gamma \in [0, 1]$, consider the optimal gain $K_{\gamma}$ in (\ref{eq:Kopt}) and the solution $P_{\gamma}$ to \eqref{eq:Ricc}. Then, $K_{\gamma} \in {\cal K}_1$ if and only if there exists a symmetric matrix $X \in \R^{n \times n}$ such that $\gamma P_{\gamma} + X \succ 0 $ and 
\begin{equation}\label{eq:condcns}
 K_{\gamma}^\top R K_{\gamma} + Q + (\gamma - 1) P_{\gamma} + X - (A+BK_{\gamma})^\top X(A+BK_{\gamma}) \succ 0.
\end{equation}
\end{thm}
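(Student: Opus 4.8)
The plan is to observe that, once the identity \eqref{eq:clsd} satisfied by $K_{\gamma}$ and $P_{\gamma}$ is substituted in, condition \eqref{eq:condcns} is nothing but a discrete-time Lyapunov inequality for the optimal closed-loop matrix $A+BK_{\gamma}$ with the \emph{shifted} certificate $W:=\gamma P_{\gamma}+X$. Write $A_{\mathrm{cl}}:=A+BK_{\gamma}$. First I would record that, for \emph{any} symmetric $X$, rearranging \eqref{eq:clsd} as $\gamma P_{\gamma}-A_{\mathrm{cl}}^\top \gamma P_{\gamma}A_{\mathrm{cl}} = K_{\gamma}^\top R K_{\gamma}+Q+(\gamma-1)P_{\gamma}$ and adding $X-A_{\mathrm{cl}}^\top X A_{\mathrm{cl}}$ to both sides yields
\begin{equation*}
W - A_{\mathrm{cl}}^\top W A_{\mathrm{cl}} = K_{\gamma}^\top R K_{\gamma} + Q + (\gamma-1)P_{\gamma} + X - A_{\mathrm{cl}}^\top X A_{\mathrm{cl}},
\end{equation*}
so that the left-hand side of \eqref{eq:condcns} coincides exactly with $W-A_{\mathrm{cl}}^\top W A_{\mathrm{cl}}$.

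For the ``if'' direction, suppose such an $X$ exists. Then $W=\gamma P_{\gamma}+X\succ0$ by assumption and, by the displayed identity, $W-A_{\mathrm{cl}}^\top W A_{\mathrm{cl}}\succ0$; the standard discrete-time Lyapunov theorem (see, e.g., \cite{Hespanha-book-2009}) then gives $\rho(A_{\mathrm{cl}})<1$, i.e., $K_{\gamma}\in\mathcal{K}_1$.

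For the ``only if'' direction, assume $K_{\gamma}\in\mathcal{K}_1$, i.e., $A_{\mathrm{cl}}$ is Schur. Then the series $W:=\sum_{k=0}^{\infty}(A_{\mathrm{cl}}^\top)^k A_{\mathrm{cl}}^k$ converges, is symmetric positive definite, and is the unique solution of $W-A_{\mathrm{cl}}^\top W A_{\mathrm{cl}}=\1$. Setting $X:=W-\gamma P_{\gamma}$, which is symmetric, we get $\gamma P_{\gamma}+X=W\succ0$, and by the displayed identity the left-hand side of \eqref{eq:condcns} equals $\1\succ0$, so \eqref{eq:condcns} holds. This establishes the equivalence.

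The only point requiring care is the sign bookkeeping in the substitution step; beyond that there is no real obstacle. The conceptual content — already stressed in the introduction — is that $X$ is allowed to be \emph{indefinite}: only the combination $\gamma P_{\gamma}+X$ must be positive definite. This freedom is precisely what makes the condition necessary \emph{and} sufficient, since restricting the correction term to be sign-definite (as implicitly done in \eqref{eq:cnsK} and \eqref{eq:LyapKRK}) would break the converse implication.
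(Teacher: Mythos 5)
Your proof is correct and follows essentially the same route as the paper: both rest on the change of variables $S=\gamma P_{\gamma}+X$ (your $W$) together with the closed-loop Riccati identity \eqref{eq:clsdgam}, which turns the discrete-time Lyapunov inequality for $A+BK_{\gamma}$ into \eqref{eq:condcns}. The only cosmetic difference is that you split the equivalence into two implications and exhibit an explicit Lyapunov solution via the series $\sum_{k}(A_{\mathrm{cl}}^\top)^kA_{\mathrm{cl}}^k$ in the necessity direction, whereas the paper phrases the whole argument as a chain of equivalences.
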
 $\hfill \Box$

\begin{proof} Let $\gamma\in[0,1]$, 
$K_{\gamma} \in {\cal K}_1$ is equivalent to find $S = S^\top \succ 0$ such that
\begin{equation}\label{eq:lyapX}
(A+BK_{\gamma})^\top S(A+BK_{\gamma})-S \prec 0.
\end{equation}
Let $X:=S-\gamma P_\gamma$, then $S = \gamma P_{\gamma}  +X$. Hence, \eqref{eq:lyapX} is  equivalent to
$(\sqrt{\gamma}(A+BK_{\gamma}))^\top P_{\gamma}(\sqrt{\gamma} (A+BK_{\gamma}))-\gamma P_{\gamma}+(A+BK_{\gamma})^\top X(A+BK_{\gamma}) -X \prec 0$. 
By (\ref{eq:clsdgam}), this last inequality is equivalent to 
\begin{equation}
P_{\gamma}\!-\!{K_{\gamma}}^\top R K_{\gamma}\!-\!Q \!+\!(A+BK_{\gamma})^\top X(A+BK_{\gamma})-\gamma P_{\gamma} -X \prec 0,
\end{equation}
which corresponds to (\ref{eq:condcns}), thereby proving the desired result.\end{proof}

Theorem \ref{thcns} provides a necessary and sufficient condition to ensure the stability of the optimal closed-loop system (\ref{eq:sys-optimal-closed-loop}) for a given value of $\gamma$. 
Note that the requirement that $\gamma P_{\gamma} + X \succ 0$ does not necessarily imply that $X\succeq 0$. 

To conclude this section, we revisit Example \ref{Ex: 1} in the light of the results presented so far. 
\begin{revexample} 
The condition 
\eqref{eq:LyapKRK} is satisfied for $\gamma \in [0.97, 1]$, while the new condition \eqref{eqKuc} is feasible for $\gamma \in [0.30, 1]$. On the other hand, the necessary and sufficient condition \eqref{eq:condcns} yields that $\gamma$ needs to be in $[0, 0.02) \cup (0.12, 1]$ for system (\ref{eq:sys-optimal-closed-loop}) to be stable, which is consistent with Fig.~\ref{fig:enter-label}.~\hfill $\Box$ \end{revexample}  

When $\gamma \in [0.02, 0.12]$, the optimal gain $K_\gamma$ is not stabilizing in Example \ref{Ex: 1}. In this case, we may then be interested in designing an alternative, stabilizing gain whose cost is close to $V^{\star}_{\gamma}$ in (\ref{eq:optimal-value-function}): this is the purpose of the next section. 


\section{Stabilizing near-optimal state-feedback laws}
\label{sect:near-optimal}

We are motivated in this section by the scenario where $K_\gamma\notin\mathcal{K}_1$. The objective is to design another gain, either denoted $\widehat K_\gamma$ or $\widebar K_\gamma$ as we will see, such that $A+B\widehat{K}_\gamma$ (or $A+B\widebar{K}_\gamma$) is Schur and that is near-optimal. By near-optimal, we either mean that:
\begin{enumerate}
    \item[(i)] the induced cost $J_{\gamma}(x_0,\widehat K_\gamma)$ is less than a given constant that is minimized  for a given initial condition (Section \ref{subsect:lmi-based-design-guaranteed-cost});
    \item[(ii)] $\widebar{K}_\gamma$ is as close as possible to the optimal gain $K_\gamma$ (Section \ref{subsect:lmi-based-design-close-gain}).
\end{enumerate} 
We finally introduce a variant of PI  that generates a stabilizing gain while improving the induced cost by the previous controller at each iteration (Section \ref{subsect:pi}).

\subsection{LMI-based design with guaranteed cost}\label{subsect:lmi-based-design-guaranteed-cost}

Without loss of generality\footnote{Note that if an initial choice of $C$ and $D$ satisfying SA\ref{sass:observability} does not yield $C^{\top}D = \0$, we can always redefine them as $C_2 =\left [\begin{smallmatrix}
    C\\
    \0
\end{smallmatrix}\right ]$ and $D_2 =\left [\begin{smallmatrix}
    \0\\
    D
\end{smallmatrix}\right]$, which ensures that $C_2C_2^{\top} = Q$, $D_2D_2^{\top} = R$, and $C_2^{\top}D_2 = \0$.}, we consider that SA\ref{sass:observability} holds with  $C^\top D = \0$. 
The closed-loop form of \eqref{eq:Ricc}, expressed as \eqref{eq:clsdgam}, can be written as 
\begin{equation}\label{eq:close}
(\sqrt{\gamma}(A+BK_{\gamma}))^\top P_{\gamma}(\sqrt{\gamma}(A+BK_{\gamma}))- P_{\gamma}+ {\cal L}^\top_{K_{\gamma}} {\cal L}_{K_{\gamma}}  = 0
\end{equation}
where ${\cal L}_{K_\gamma} := C+DK_{\gamma}$.
The next theorem presents a method to synthesize near-optimal, stabilizing gains $\widehat{K}_\gamma$, whose associated cost is less than a known constant for a given initial condition, like in \cite[Chapter 10]{Boyd94} and \cite{van-waarde-mesbahi-ifac2020data} where undiscounted costs are considered.

\begin{thm}\label{thmGCGXGG}
Given any fixed $\gamma \in [0, 1]$, $\mu\in\R_{>0}, $ initial condition $x_0\in\R^{n}$, consider cost (\ref{eq:costd}) with $C^\top D = \0$.  Suppose there exist $X_{\gamma}\in \R^{n\times n}$, $Z_{\gamma}\in \R^{n\times n}$ symmetric and  $G_{\gamma}\in \R^{n\times n}$,  $Y_{\gamma}\in \R^{m\times n}$ such that 
\begin{subequations}
  \begin{align}
\left[	\begin{array}{cc}
 				\mu&  (\bullet)^\top \\
 				x_0^\top  & X_{\gamma}
 			\end{array}\right] \succeq 0 \label{suffiDARIH2GGi-a}\\
			\left[	\begin{array}{ccc}
 				G_{\gamma}+G_{\gamma}^\top-X_{\gamma} &  (\bullet)^\top &  (\bullet)^\top\\
 				\sqrt{\gamma }AG_{\gamma}+	\sqrt{\gamma }BY_{\gamma} & X_{\gamma} &  (\bullet)^\top \label{suffiDARIH2GGi-b}\\
 				CG_{\gamma} + DY_{\gamma} & \0& \1
 			\end{array}\right] \succeq 0 \\
			\left[	\begin{array}{cc}
 				G_{\gamma} + G_{\gamma}^\top -Z_{\gamma} &  (\bullet)^\top \\
 				{A}G_{\gamma}+{B}Y_{\gamma} & Z_{\gamma}
 			\end{array}\right] \succ 0.\label{suffiDARIH2GGi-c}
\end{align}\label{suffiDARIH2GGi}
\end{subequations}
Then $\widehat{K}_{\gamma} = Y_{\gamma}G_{\gamma}^{-1}\in\mathcal{K}_1$ and $J_{\gamma}(x_0,\widehat{K}_{\gamma})\leq x_0^\top X^{-1}_{\gamma} x_0 \leq \mu$.  Moreover, if  \eqref{suffiDARIH2GGi} is feasible for some $\gamma$, then it is feasible for all $\gamma' \in [0, \gamma]$. \hfill $\Box$
 \end{thm}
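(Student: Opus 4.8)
The plan is to dispatch the three assertions—$\widehat K_\gamma\in\mathcal{K}_1$, the cost bound, and feasibility monotonicity in $\gamma$—all through the standard slack-variable manipulation of LMIs. First I would record the elementary facts used throughout: the strictness of \eqref{suffiDARIH2GGi-c} forces its lower-right block $Z_\gamma\succ0$ and hence, by Schur complement, $G_\gamma+G_\gamma^\top\succ Z_\gamma\succ0$, so $G_\gamma$ is nonsingular; reading off the principal submatrix of \eqref{suffiDARIH2GGi-b} on the first and third block rows also gives $G_\gamma+G_\gamma^\top-X_\gamma\succeq (CG_\gamma+DY_\gamma)^\top(CG_\gamma+DY_\gamma)\succeq G_\gamma^\top Q G_\gamma\succ0$, and $X_\gamma\succ0$ (which is anyway implicit in the meaning of $x_0^\top X_\gamma^{-1}x_0$). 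I would then set $\widehat K_\gamma:=Y_\gamma G_\gamma^{-1}$, so that $AG_\gamma+BY_\gamma=(A+B\widehat K_\gamma)G_\gamma$ and $CG_\gamma+DY_\gamma=(C+D\widehat K_\gamma)G_\gamma=\mathcal{L}_{\widehat K_\gamma}G_\gamma$, and I would use repeatedly the linearization inequality $G^\top S^{-1}G\succeq G+G^\top-S$ for any nonsingular $G$ and any $S=S^\top\succ0$ (obtained by expanding $(G-S)^\top S^{-1}(G-S)\succeq0$).

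For $\widehat K_\gamma\in\mathcal{K}_1$: take the Schur complement of \eqref{suffiDARIH2GGi-c} with respect to $Z_\gamma$, which yields $G_\gamma+G_\gamma^\top-Z_\gamma-(AG_\gamma+BY_\gamma)^\top Z_\gamma^{-1}(AG_\gamma+BY_\gamma)\succ0$; bound $G_\gamma+G_\gamma^\top-Z_\gamma\preceq G_\gamma^\top Z_\gamma^{-1}G_\gamma$, substitute $Y_\gamma=\widehat K_\gamma G_\gamma$, and pre/post-multiply by $G_\gamma^{-\top}$ and $G_\gamma^{-1}$ to get $Z_\gamma^{-1}-(A+B\widehat K_\gamma)^\top Z_\gamma^{-1}(A+B\widehat K_\gamma)\succ0$. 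Thus $Z_\gamma^{-1}$ is a Lyapunov matrix for $A+B\widehat K_\gamma$, i.e. $\rho(A+B\widehat K_\gamma)<1$, which is $\widehat K_\gamma\in\mathcal{K}_1$.

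For the guaranteed cost: apply the same recipe to \eqref{suffiDARIH2GGi-b}—Schur complement with respect to the block $\mathrm{diag}(X_\gamma,\1)$, the bound $G_\gamma+G_\gamma^\top-X_\gamma\preceq G_\gamma^\top X_\gamma^{-1}G_\gamma$, substitution of $Y_\gamma$, congruence by $G_\gamma^{-1}$—and with $\widehat P_\gamma:=X_\gamma^{-1}\succ0$ this gives $\gamma(A+B\widehat K_\gamma)^\top\widehat P_\gamma(A+B\widehat K_\gamma)-\widehat P_\gamma+\mathcal{L}_{\widehat K_\gamma}^\top\mathcal{L}_{\widehat K_\gamma}\preceq0$. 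Since $C^\top D=\0$, one has $\mathcal{L}_{\widehat K_\gamma}^\top\mathcal{L}_{\widehat K_\gamma}=Q+\widehat K_\gamma^\top R\widehat K_\gamma$, so this is a discounted Lyapunov inequality for the closed loop. Evaluating it along $x_{k+1}=(A+B\widehat K_\gamma)x_k$, $u_k=\widehat K_\gamma x_k$, yields $\gamma^{k+1}x_{k+1}^\top\widehat P_\gamma x_{k+1}-\gamma^k x_k^\top\widehat P_\gamma x_k\le-\gamma^k\bigl(x_k^\top Q x_k+u_k^\top R u_k\bigr)$; telescoping from $0$ to $N-1$ and letting $N\to\infty$—the boundary term $\gamma^N x_N^\top\widehat P_\gamma x_N$ vanishes because $x_N\to0$ by the stability step and $\gamma\in[0,1]$—gives $J_\gamma(x_0,\widehat K_\gamma)\le x_0^\top\widehat P_\gamma x_0=x_0^\top X_\gamma^{-1}x_0$. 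Finally $x_0^\top X_\gamma^{-1}x_0\le\mu$ is exactly the Schur complement of \eqref{suffiDARIH2GGi-a}.

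For monotonicity: \eqref{suffiDARIH2GGi-a} and \eqref{suffiDARIH2GGi-c} do not involve $\gamma$, so for $\gamma'\in[0,\gamma]$ I keep the same $(X_\gamma,Z_\gamma,G_\gamma,Y_\gamma)$ and only need to re-verify \eqref{suffiDARIH2GGi-b}. Denoting by $M(\gamma)$ its matrix, only the $(2,1)$/$(1,2)$ entries vary, linearly in $\sqrt\gamma$, so for $\gamma>0$ one can write $M(\gamma')=(1-\tfrac{\sqrt{\gamma'}}{\sqrt\gamma})M(0)+\tfrac{\sqrt{\gamma'}}{\sqrt\gamma}M(\gamma)$. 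Here $M(\gamma)\succeq0$ by hypothesis, and after a symmetric permutation $M(0)$ is block diagonal with blocks $X_\gamma$ and $\left[\begin{smallmatrix}G_\gamma+G_\gamma^\top-X_\gamma&(\bullet)^\top\\ CG_\gamma+DY_\gamma&\1\end{smallmatrix}\right]$; the first is $\succeq0$ and the second is a principal submatrix of $M(\gamma)$, hence also $\succeq0$, so $M(0)\succeq0$. Since $\tfrac{\sqrt{\gamma'}}{\sqrt\gamma}\in[0,1]$, $M(\gamma')$ is a convex combination of positive semidefinite matrices and therefore $\succeq0$ (the case $\gamma=0$ being trivial), which proves feasibility for every $\gamma'\in[0,\gamma]$. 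The computations above are routine once the linearization trick is in place; the points that need care are the interplay between the non-strict inequalities \eqref{suffiDARIH2GGi-a}–\eqref{suffiDARIH2GGi-b} and the strict \eqref{suffiDARIH2GGi-c}—in particular justifying $X_\gamma\succ0$ so that the cost bound is well posed—and, for the last claim, noticing that the $\gamma=0$ instance of \eqref{suffiDARIH2GGi-b} block-diagonalizes into principal submatrices of the given LMI, which is what makes the convexity argument close.
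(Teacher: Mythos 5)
Your proof is correct and follows essentially the same route as the paper's: the linearization inequality $G^\top S^{-1}G\succeq G+G^\top-S$ plus Schur complements to extract the Lyapunov inequality from \eqref{suffiDARIH2GGi-c} and the discounted dissipation inequality from \eqref{suffiDARIH2GGi-b}, then \eqref{suffiDARIH2GGi-a} for $x_0^\top X_\gamma^{-1}x_0\le\mu$. The only cosmetic differences are that you spell out the telescoping step for the cost bound (which the paper leaves implicit) and prove the $\gamma'$-monotonicity by writing $M(\gamma')$ as a convex combination of $M(0)\succeq0$ and $M(\gamma)\succeq0$ rather than by the paper's congruence rescaling of the $(2,2)$ block to $X_\gamma/\gamma'$ — the same underlying fact, with your version handling the $\gamma'=0$ endpoint slightly more cleanly.
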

\begin{proof} Let $\gamma\in[0,1]$, $\mu\in\Rlp$ and $x_0\in\R^{n}$. 
By taking the Schur complement of $X_{\gamma}$ in the matrix given in (\ref{suffiDARIH2GGi-a}), we derive that if $ x_0^\top X^{-1}_{\gamma} x_0  \leq \mu$ then (\ref{suffiDARIH2GGi-a}) holds. When verified, the inequality (\ref{suffiDARIH2GGi-c}) ensures that 
\begin{equation}\label{Lyapeq}
(A+B\widehat{K}_{\gamma})^\top Z_{\gamma}^{-1}(A+B\widehat{K}_{\gamma})-Z_{\gamma}^{-1} \prec  0. 
\end{equation}
Indeed, when \eqref{suffiDARIH2GGi-c} is verified, $Z_{\gamma}$ and its inverse are guaranteed to be positive definite matrices and we have
\begin{equation}
(Z_{\gamma}-G_{\gamma})^{\top}Z_{\gamma}^{-1}(Z_{\gamma} -G_{\gamma})\succeq 0,
\end{equation}	
which is equivalent to
\begin{equation}
G_{\gamma}^{\top}Z_{\gamma}^{-1}G_{\gamma} \succeq G_{\gamma} + G_{\gamma}^{\top} -Z_{\gamma}
\end{equation}
As a consequence, 
\begin{equation}
	\left[	\begin{array}{cc}
		G_{\gamma} + G_{\gamma}^{\top} -Z_{\gamma} &  (\bullet)^\top\\
		{A}G_{\gamma}+{B}Y_{\gamma} & Z_{\gamma}
	\end{array}\right] \succ  0
\end{equation}
	implies that
  \begin{equation}\label{ZXeq}
	 	\left[	\begin{array}{cc}
	 		G_{\gamma}^{\top}Z_{\gamma}^{-1}G_{\gamma} &  (\bullet)^\top\\
	 		{A}G_{\gamma}+{B}Y_{\gamma} & Z_{\gamma}
	 	\end{array}\right] \succ  0
	 \end{equation}
	 and hence
	$$
	\left[	\begin{array}{cc}
		Z_{\gamma}^{-1} &  (\bullet)^\top \\
		{A}+{B}Y_{\gamma}G_{\gamma}^{-1} & Z_{\gamma}
	\end{array}\right] \succ  0,
	$$
which corresponds, by taking the Schur complement of $Z_{\gamma}$, to \eqref{Lyapeq}. Since \eqref{Lyapeq} holds, we have that $\widehat K_{\gamma}\in\mathcal{K}_1$. 

Using a similar reasoning, we obtain that when \eqref{suffiDARIH2GGi-b} is verified then
\begin{equation}\label{eq: suboptfinalform}
     \left[	\begin{array}{cc}
 		X_{\gamma}^{-1}-Q-(Y_{\gamma}G_{\gamma}^{-1})^{\top}RY_{\gamma}G_{\gamma}^{-1} & (\bullet)^{\top}\\
        \sqrt{\gamma}(A+BY_{\gamma}G_{\gamma}^{-1})&X_{\gamma}
 			\end{array}\right]\succeq0
\end{equation}
and recalling that $\widehat{K}_{\gamma}=Y_{\gamma}G_{\gamma}^{-1}$, by taking the Schur complement of $X_{\gamma}$, \eqref{eq: suboptfinalform} implies 
\begin{equation}
\label{eq: subopt}
    \sqrt{\gamma}(A\!+\!B\widehat{K}_{\gamma})^{\top}X_{\gamma}^{-1}\sqrt{\gamma}(A+B\widehat{K}_{\gamma})\!-\!X_{\gamma}^{-1}\!+\!Q\!+\!\widehat{K}_{\gamma}R\widehat{K}_{\gamma}\preceq0,
\end{equation}
which guarantees that $J_{\gamma}(x_0,\widehat{K}_{\gamma})\leq x_0^{\top}X_{\gamma}^{-1}x_0$ for any $x_0\in \R^{n}$.

Finally, let $\gamma'\in[0,\gamma]$. Inequality (\ref{suffiDARIH2GGi-b}) is equivalent to
\begin{equation}\label{eq:proof-true-for-gamma-implies-true-for-gamma'}
\left[	\begin{array}{ccc}
	G_{\gamma'}+G_{\gamma'}^\top-X_{\gamma'} &  (\bullet)^\top&  (\bullet)^\top \\
	AG_{\gamma'}+	BY_{\gamma'} & \dfrac{1}{\gamma'}X_{\gamma'}&   (\bullet)^\top \\
	CG_{\gamma'} + DY_{\gamma'} & \0& \1
\end{array}\right] \succeq  0
\end{equation}
As ${X_{\gamma}}/{\gamma' } \succeq {X_{\gamma}}/{\gamma} $, (\ref{eq:proof-true-for-gamma-implies-true-for-gamma'}) is  feasible 
by choosing $G_{\gamma'}=G_{\gamma}$, $X_{\gamma'}=X_{\gamma}$ and $Y_{\gamma'}=Y_{\gamma}$. This concludes the proof. 
\end{proof}


Theorem \ref{thmGCGXGG} allows, when (\ref{suffiDARIH2GGi}) is  feasible, to construct a near-optimal, stabilizing gain $\widehat{K}_\gamma$ for system (\ref{eq:sysd}) where, by near-optimal, we mean that, for a given initial condition $x_0\in \R^{n}$,\begin{equation}\label{eq;Guarcost}
    x_0^{\top}P_{\gamma}x_0\leq J_{\gamma}(x_0,\widehat{K}_\gamma)\leq  x_0^{\top}X^{-1}_{\gamma}x_0 \leq \mu.
\end{equation} Note that a natural objective would be to minimize the quadratic form $x^{\top}_0X^{-1}_{\gamma}x_0$ only under the constraints \eqref{suffiDARIH2GGi-b} and \eqref{suffiDARIH2GGi-c}. However, optimization toolboxes such as Matlab and Yalmip cannot handle the explicit inverse of a matrix variable. To overcome this difficulty, we introduce an auxiliary scalar variable $\mu$ through the LMI constraint \eqref{suffiDARIH2GGi-a}, which is equivalent to the inequality $x_0^{\top}X^{-1}_{\gamma}x_0 \leq \mu$. As a consequence, minimizing $\mu$ under \eqref{suffiDARIH2GGi} is exactly equivalent to minimizing $x_0^{\top}X^{-1}_{\gamma}x_0$, but in a form that solvers can handle, leading to the following convex optimization problem.
   	\begin{mini}|s|
 		{X_{\gamma}, Y_{\gamma}, Z_{\gamma}, G_{\gamma}, \mu}{\mu}
 		{}{}
			\addConstraint{\eqref{suffiDARIH2GGi} \text{ holds.}}
			\label{optim3}
    \end{mini}	

We illustrate the outcomes of Theorem \ref{thmGCGXGG} on Example \ref{Ex: 1}.
\begin{revexample} 
Take the initial state $x_0=\begin{bmatrix}
    1&1
\end{bmatrix}^{\top}$ and the matrices $C=\left[\begin{smallmatrix}
    \sqrt{2}&0\\
    0& \sqrt{3}\\
    0&0
\end{smallmatrix}\right]$, $D=\left[\begin{smallmatrix}
    0\\
    0\\
    \sqrt{5}
\end{smallmatrix}\right]$ such that $C^{\top}D=\0$.  As desired $\widehat{K}_{\gamma}\in\mathcal{K}_1$ for all $\gamma\in [0,1]$. The evolution of the cost induced by the gain $\widehat{K}_{\gamma}$ solution of the convex optimization problem \eqref{optim3} as a function of $\gamma$ is shown in Fig.~\ref{fig:LQRgammaGFin}. We see that, as expected from \eqref{eq;Guarcost}, the cost $J_{\gamma}(x_0,\widehat{K}_{\gamma})$ (green plot) is always upper-bounded by $x_0^{\top}X^{-1}_{\gamma}x_0$ (blue dashed line), and lower-bounded by the optimal cost $x_0^{\top}P_{\gamma}x_0$ (red dashed line). The upper-bound prevents $J_{\gamma}(x_0,\widehat{K}_{\gamma})$ from becoming excessively large, ensuring a specified level of performance. Notably, for $\gamma\ge0.35$, the upper bound being close to the optimal cost forces $J_{\gamma}(x_0,\widehat{K}_{\gamma})$ to also remain near the optimal value. \hfill $\Box$

\begin{figure}[t]
\centering
\includegraphics[width=0.42\textwidth]{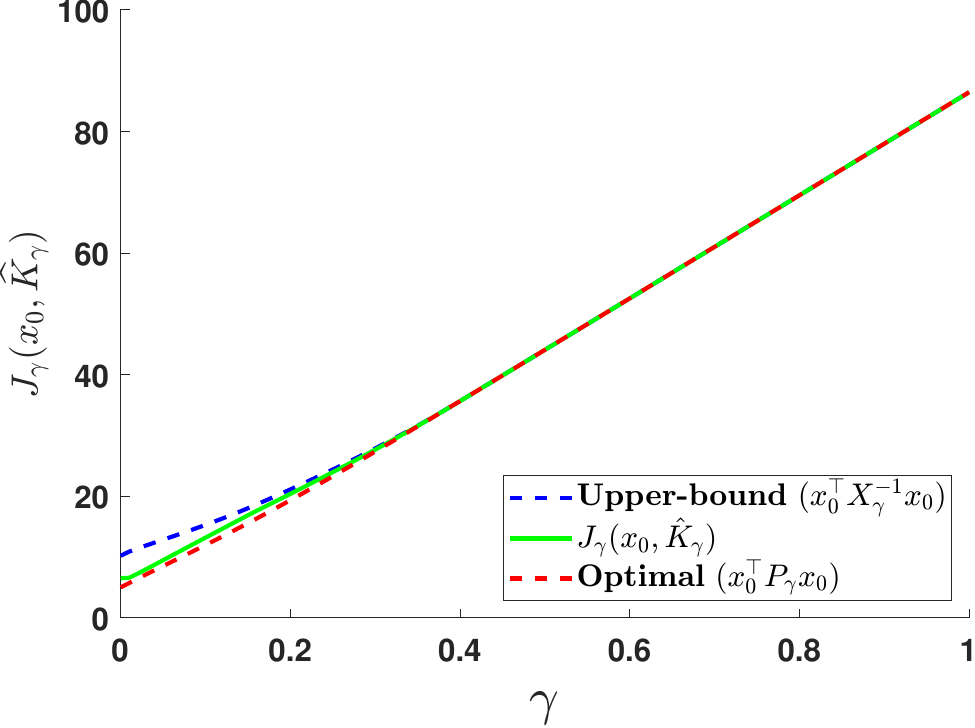}\vspace{-3mm}
    \caption{Cost $J_{\gamma}(x_0,\widehat{K}_{\gamma})$ as a function of $\gamma$ with $x_0=\begin{smallmatrix}
        [1&1]^{\top}
    \end{smallmatrix}$.}
    \label{fig:LQRgammaGFin}
    \vspace{-7mm}
\end{figure}
\end{revexample}

\subsection{LMI-based design with near-optimality bounds}\label{subsect:lmi-based-design-close-gain}

The design method presented in Section \ref{subsect:lmi-based-design-guaranteed-cost} ensures that the induced cost is less than a constant (that can be minimized using (\ref{optim3})) for a given initial condition.  In this section, we instead aim at designing a stabilizing gain, which we denote $\overline K_\gamma$, that minimizes the mismatch with the optimal gain $K_\gamma$. This problem can be formulated as  follows
    	\begin{mini}|s|
 		{\widebar{K}_{\gamma}\in {\cal K}_1}{\nu _{\gamma}}
 		{}{}
			\addConstraint{\| \widebar{K}_{\gamma}  - {K}_{\gamma}\| \leq \nu _{\gamma}}.
			\label{optimKbar}
	\end{mini}
This is equivalent to
    	\begin{mini}|s|
 		{\widebar{K}_{\gamma}, \widebar{P}_{\gamma}, \widebar{W}_{\gamma}}{\text{trace} (\widebar{W}_{\gamma})}
 		{}{}
			\addConstraint{ (\widebar{K}_{\gamma}  - {K}_{\gamma})^\top(\widebar{K}_{\gamma}  - {K}_{\gamma}) \preceq \widebar{W}_{\gamma},\,\widebar{P}_\gamma\succ 0}
			\addConstraint{(A+B\widebar{K}_{\gamma})^\top \widebar{P}_{\gamma}(A+B\widebar{K}_{\gamma}) - \widebar{P}_{\gamma} \prec 0}.
			\label{optimPbar}
	\end{mini}
The constraints in \eqref{optimPbar} are nonlinear, rendering this optimization problem non-convex. The next theorem gives sufficient LMI conditions to ensure the satisfaction of the constraints in \eqref{optimPbar}.
\begin{thm}\label{thm4}
Given any fixed $\gamma \in [0, 1]$ and consider the cost (\ref{eq:costd}) with $C^\top D = 0$. Suppose there exist $\widebar{L}_{\gamma}\in \R^{n\times n}$, $\widebar{Z}_{\gamma}\in \R^{n\times n}$ symmetric and $\widebar{S}_{\gamma}\in \R^{n\times n}$, $\widebar{T}_{\gamma}\in \R^{m\times n}$ such that
\begin{subequations}\label{suffiDARIH2GG}
\begin{align}
\left[ \begin{array}{cc} \widebar{S}_{\gamma}+\widebar{S}_{\gamma}^\top - \widebar{L}_{\gamma} &   (\bullet)^\top \\
\widebar{T}_{\gamma}-{K}_{\gamma}\widebar{S}_{\gamma}&  \1 \end{array}\right] \succeq 0 \label{suffiDARIH2GG-a}\\
\left[ \begin{array}{cc}  \widebar{S}_{\gamma}+\widebar{S}_{\gamma}^\top-\widebar{Z}_{\gamma} &  (\bullet)^\top \\
     A\widebar{S}_{\gamma}+B\widebar{T}_{\gamma} & \widebar{Z}_{\gamma} \end{array}\right] \succ 0. \label{suffiDARIH2GG-b}
\end{align}
\end{subequations}
Then $\widebar{K}_{\gamma}=\widebar{T}_{\gamma} \widebar{S}_{\gamma}^{-1}\in\mathcal{K}_1$ and $(\widebar{K}_{\gamma}  - {K}_{\gamma})^\top(\widebar{K}_{\gamma}  - {K}_{\gamma}) \preceq \widebar{L}_{\gamma}^{-1}$.  \mbox{}\hfill $\Box$
 \end{thm}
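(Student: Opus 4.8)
The plan is to mimic the structure of the proof of Theorem~\ref{thmGCGXGG}: each LMI in \eqref{suffiDARIH2GG} is a ``linearized'' version of a bilinear matrix inequality obtained through a congruence transformation and the classical slack-variable trick $(\widebar{S}_\gamma - M)^\top M^{-1}(\widebar{S}_\gamma - M)\succeq 0$, which yields $\widebar{S}_\gamma^\top M^{-1}\widebar{S}_\gamma \succeq \widebar{S}_\gamma + \widebar{S}_\gamma^\top - M$ for any symmetric $M\succ 0$. First I would show that \eqref{suffiDARIH2GG-b}, combined with this inequality applied with $M=\widebar{Z}_\gamma$, forces $\widebar{Z}_\gamma\succ 0$ and
\[
\left[\begin{array}{cc} \widebar{S}_\gamma^\top \widebar{Z}_\gamma^{-1}\widebar{S}_\gamma & (\bullet)^\top\\ A\widebar{S}_\gamma + B\widebar{T}_\gamma & \widebar{Z}_\gamma\end{array}\right]\succ 0,
\]
and then that this matrix being positive definite forces $\widebar{S}_\gamma$ to be invertible (its top-left block is positive definite, hence $\widebar{S}_\gamma$ has full rank). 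Pre- and post-multiplying by $\mathrm{diag}(\widebar{S}_\gamma^{-\top},\1)$ and its transpose gives
\[
\left[\begin{array}{cc} \widebar{Z}_\gamma^{-1} & (\bullet)^\top\\ (A + B\widebar{K}_\gamma)\widebar{S}_\gamma^{-1}\cdot(\bullet) & \dots \end{array}\right],
\]
more precisely the matrix $\left[\begin{smallmatrix} \widebar{Z}_\gamma^{-1} & (\bullet)^\top\\ A+B\widebar{K}_\gamma & \widebar{Z}_\gamma\end{smallmatrix}\right]\succ 0$ with $\widebar{K}_\gamma = \widebar{T}_\gamma\widebar{S}_\gamma^{-1}$; taking the Schur complement of $\widebar{Z}_\gamma$ yields $(A+B\widebar{K}_\gamma)^\top\widebar{Z}_\gamma^{-1}(A+B\widebar{K}_\gamma) - \widebar{Z}_\gamma^{-1}\prec 0$, so $\widebar{P}_\gamma := \widebar{Z}_\gamma^{-1}$ certifies $\widebar{K}_\gamma\in\mathcal{K}_1$ and solves the second constraint of \eqref{optimPbar}.

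Next I would handle \eqref{suffiDARIH2GG-a} the same way: apply the slack inequality with $M = \widebar{L}_\gamma$ — note that \eqref{suffiDARIH2GG-a} together with $\widebar{S}_\gamma^\top \widebar{L}_\gamma^{-1}\widebar{S}_\gamma\succeq \widebar{S}_\gamma + \widebar{S}_\gamma^\top - \widebar{L}_\gamma$ gives $\widebar{L}_\gamma\succ 0$ and
\[
\left[\begin{array}{cc} \widebar{S}_\gamma^\top \widebar{L}_\gamma^{-1}\widebar{S}_\gamma & (\bullet)^\top\\ \widebar{T}_\gamma - K_\gamma\widebar{S}_\gamma & \1\end{array}\right]\succeq 0 .
\]
Pre- and post-multiplying by $\mathrm{diag}(\widebar{S}_\gamma^{-\top},\1)$ and its transpose (legitimate since $\widebar{S}_\gamma$ is already known to be invertible from the previous step) turns this into $\left[\begin{smallmatrix} \widebar{L}_\gamma^{-1} & (\bullet)^\top\\ \widebar{K}_\gamma - K_\gamma & \1\end{smallmatrix}\right]\succeq 0$, and the Schur complement of the $\1$ block gives exactly $\widebar{L}_\gamma^{-1} - (\widebar{K}_\gamma - K_\gamma)^\top(\widebar{K}_\gamma - K_\gamma)\succeq 0$, i.e. the first constraint of \eqref{optimPbar} with $\widebar{W}_\gamma = \widebar{L}_\gamma^{-1}$. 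This also shows why minimizing $\mathrm{trace}(\widebar{W}_\gamma)$ corresponds here to working with $\widebar{L}_\gamma^{-1}$.

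I expect the main subtlety to be the invertibility of $\widebar{S}_\gamma$ and the justification of the congruence transformations: one must extract invertibility of $\widebar{S}_\gamma$ purely from \eqref{suffiDARIH2GG-b} (via positive definiteness of the $(1,1)$ block $\widebar{S}_\gamma^\top\widebar{Z}_\gamma^{-1}\widebar{S}_\gamma$, which is impossible if $\widebar{S}_\gamma$ is singular) before one is allowed to conjugate \eqref{suffiDARIH2GG-a} by $\widebar{S}_\gamma^{-1}$, and one must be careful that the slack-variable step only produces a $\succeq$ (resp. $\succ$) after replacing the original block, so the chain of implications must be ordered so that the strict inequality in \eqref{suffiDARIH2GG-b} is preserved where needed. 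Everything else — the Schur complements, the congruences, the identification $\widebar{P}_\gamma = \widebar{Z}_\gamma^{-1}$, $\widebar{W}_\gamma = \widebar{L}_\gamma^{-1}$ — is routine bookkeeping analogous to the proof of Theorem~\ref{thmGCGXGG}.
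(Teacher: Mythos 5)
Your plan is exactly the argument the paper intends: the published proof of Theorem~\ref{thm4} is a one-line pointer to the proof of Theorem~\ref{thmGCGXGG}, and your two slack-variable/congruence/Schur-complement chains (identifying $\widebar{P}_\gamma=\widebar{Z}_\gamma^{-1}$ and $\widebar{W}_\gamma=\widebar{L}_\gamma^{-1}$) are the correct way to fill it in. Your treatment of the invertibility of $\widebar{S}_\gamma$ from the strict inequality \eqref{suffiDARIH2GG-b} is also sound (indeed $\widebar{S}_\gamma+\widebar{S}_\gamma^\top\succ\widebar{Z}_\gamma\succ 0$ already forces $\widebar{S}_\gamma$ to be nonsingular), and the ordering of the steps --- establish invertibility from \eqref{suffiDARIH2GG-b} before conjugating \eqref{suffiDARIH2GG-a} --- is the right one.

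The one step that does not hold as written is the claim that \eqref{suffiDARIH2GG-a} ``gives $\widebar{L}_\gamma\succ 0$''. The inequality $\widebar{S}_\gamma^\top\widebar{L}_\gamma^{-1}\widebar{S}_\gamma\succeq\widebar{S}_\gamma+\widebar{S}_\gamma^\top-\widebar{L}_\gamma$ you invoke is the expansion of $(\widebar{S}_\gamma-\widebar{L}_\gamma)^\top\widebar{L}_\gamma^{-1}(\widebar{S}_\gamma-\widebar{L}_\gamma)\succeq 0$, which presupposes $\widebar{L}_\gamma\succ 0$, so the argument is circular. Moreover, positivity of $\widebar{L}_\gamma$ genuinely cannot be extracted from \eqref{suffiDARIH2GG}: the $(1,1)$ block of \eqref{suffiDARIH2GG-a} only yields the upper bound $\widebar{L}_\gamma\preceq\widebar{S}_\gamma+\widebar{S}_\gamma^\top$, and, for instance, $\widebar{L}_\gamma=-\1$ remains feasible while rendering the claimed conclusion $(\widebar{K}_\gamma-K_\gamma)^\top(\widebar{K}_\gamma-K_\gamma)\preceq\widebar{L}_\gamma^{-1}$ false. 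Hence $\widebar{L}_\gamma\succ 0$ must be taken as an additional hypothesis rather than derived --- it is implicit in the theorem statement through the appearance of $\widebar{L}_\gamma^{-1}$ (the same implicit assumption the paper makes for $X_\gamma$ in Theorem~\ref{thmGCGXGG}), and it is harmless in practice since one would add it as a constraint in \eqref{optimSTbar}. Once it is assumed, the rest of your argument goes through verbatim.
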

 
\begin{proof} The proof is derived by multiplying  \eqref{optimPbar} by $-1$ and following  a similar reasoning as in the proof of Theorem~\ref{thmGCGXGG}.\end{proof}  

Theorem \ref{thm4} can be used to design a stabilizing gain $\widebar{K}_\gamma$ with
a guaranteed upper-bound on $(\widebar{K}_{\gamma}  - {K}_{\gamma})^\top(\widebar{K}_{\gamma}  - {K}_{\gamma})$ by maximizing the trace of $\widebar{L}_\gamma$. This is achieved by solving the next convex optimization problem
\begin{maxi}|s|
{\widebar{S}_{\gamma}, \widebar{T}_{\gamma},  \widebar{L}_{\gamma}, \widebar{Z}_{\gamma}}{\text{trace} (\widebar{L}_{\gamma})}
{}{}
\addConstraint{\eqref{suffiDARIH2GG} \text{ holds.}}
			\label{optimSTbar}
\end{maxi}

\begin{revexample}
Fig.~\ref{fig:JgJb} compares the cost obtained with $\widehat{K}_{\gamma}$ and $\widebar{K}_{\gamma}$, solutions to \eqref{optim3} and \eqref{optimSTbar} respectively, for $\gamma \in [0, 1]$ and  $x_0=\begin{bmatrix}
    1&1
\end{bmatrix}^{\top}$. We introduce for this purpose the relative error of the cost induced by a gain $K\in \R^{m\times n}$ at an initial state $x_0\in \R^{n} \setminus\{0\}$, defined as
\begin{equation}\label{eq:varepsilon}
 \varepsilon_{\gamma}(x_0,K):=\frac{J_{\gamma}(x_0,K)-J_{\gamma}(x_0,K_{\gamma})}{J_{\gamma}(x_0,K_{\gamma})}, 
\end{equation}
    which is used to compare the cost induced by gain $K$, namely $J_{\gamma}(x_0,K)$, with the optimal cost $J_{\gamma}(x_0,K_{\gamma})$. We exclude  $x_0=0$ in the definition of $\varepsilon_\gamma$ as  $J_{\gamma}(x_0,K_{\gamma})=0$ in this case, which makes it ill-defined.  Hence, Fig.~\ref{fig:JgJb} presents the evolution of $\varepsilon_{\gamma}(x_0,\widehat{K}_{\gamma})$ and $\varepsilon_{\gamma}(x_0,\widebar{K}_{\gamma})$ as a function of $\gamma$. The results show that a stabilizing control gain can always be found by solving \eqref{optimSTbar}, with performance close to the optimal one. Notably, in this example, the gain $\widebar{K}_{\gamma}$ outperforms $\widehat{K}_{\gamma}$, achieving a relative error below $0.1\%$ for any $\gamma \in [0,1]$, whereas the cost obtained with $\widehat{K}_{\gamma}$ exhibits a relative error of up to $30\%$. Note that the cost $J_{\gamma}(x_0,\widebar{K}_{\gamma})$ differs from $J_{\gamma}(x_0,K_{\gamma})$ only for values of $\gamma$ where $K_{\gamma} \notin {\cal K}_1$. Even in these cases, the difference remains marginal.     Similar results were obtained by testing 16 initial conditions uniformly distributed within the square $[-1,1]\times[-1,1]$. This illustrates that, for this example, Theorem \ref{thm4} can be used to construct stabilizing state-feedback laws without significantly degrading the optimal cost. \hfill $\Box$
\begin{figure}[ht]
    \centering
    \includegraphics[width=0.35\textwidth]{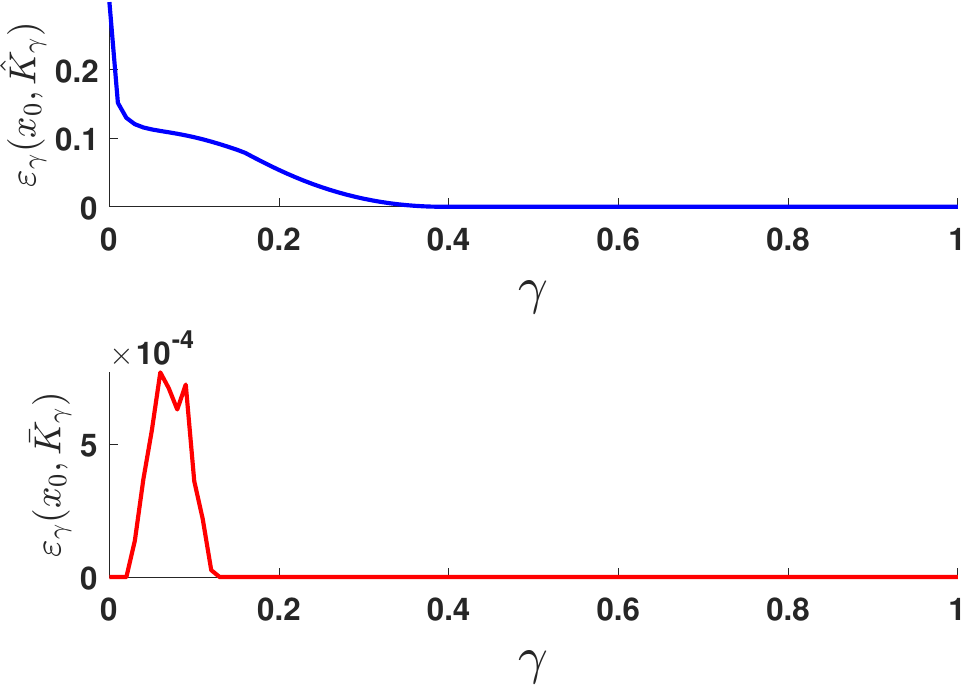}\vspace{-5mm}
    \caption{$\varepsilon_{\gamma}(x_0,\widehat{K}_{\gamma})$ and $\varepsilon_{\gamma}(x_0,\bar{K}_{\gamma})$ as functions of $\gamma$ for $x_0=\begin{smallmatrix}
        [1&1]^{\top}
    \end{smallmatrix}$.}
    \label{fig:JgJb}
\end{figure}
\end{revexample}


To conclude this section, we finally present a novel variant of PI, which allows constructing a sequence of control gains in $\mathcal{K}_1$, with iteratively improved costs. Interestingly, this algorithm can be combined with the results presented so far in this section as we explain.

\subsection{Near-optimal, recursively stabilizing PI}\label{subsect:pi}


{
\setlength{\textfloatsep}{0pt}
\begin{algorithm}
\caption{sPI}
\algsetup{
linenosize=\small,
linenodelimiter=.
}
\textbf{Input:} $K_0\in \mathcal{K}_1$, $\gamma\in [0,1]$, $A$, $B$ in \eqref{eq:sysd} and $Q$, $R$ in \eqref{eq:costd}\\
\textbf{Output:} $K_{\infty}$, $P_{\infty}$
\begin{algorithmic}[1]
\STATE \textbf{Initial evaluation step:}  \vspace{-5 pt}
    {\small\begin{equation}
        \label{sPI.1}
        \tag{sPI.1}
        \sqrt{\gamma}(A+BK_0)P_0\sqrt{\gamma}(A+BK_0)-P_0+K_0^{\top}RK_{0}+Q=0
    \end{equation}}
\vspace{-10 pt}
\FOR{$j\in \Z_{> 0}$}
\STATE \textbf{Improvement step:}\\
Select $\alpha_j\in(0,1]$ such that $K_j\in \mathcal{K}_1$ with
\begin{equation}
\begin{array}{rlll}
    \label{sPI.2}
    \tag{sPI.2}
    K_{j} \!\!&\!\! =\!\! &\!\!  \alpha _j(-\gamma  (R+\gamma  B^{\top}P_{j-1} B)^{-1}B^{\top}P_{j-1} A) \\
    & & + (1-\alpha _j)K_{j-1}
    \end{array}
\end{equation}\vspace{-10 pt}
\STATE \textbf{Evaluation step:}
    {\small\begin{equation}
        \label{sPI.3}
        \tag{sPI.3}
       \sqrt{\gamma}(A+BK_j)^{\top}P_j\sqrt{\gamma}(A+BK_j)-P_j+K_j^{\top}RK_{j}+Q=0
    \end{equation}} \vspace{-12 pt}
\ENDFOR
\RETURN $K_{\infty}\in \mathcal{K}_1$ and $P_{\infty}$.
\end{algorithmic}
\label{alg: sPI}
\end{algorithm}
}
The new algorithm called sPI is presented in Algorithm \ref{alg: sPI}. Given $\gamma \in [0,1]$ and an initial control gain $K_0\in \mathcal{K}_1$, sPI starts with the same initial evaluation step as in standard PI to generate  $P_0$ in  \eqref{sPI.1}. The differences with PI appear at the next iterations. At any iteration $j\in \Z_{>0}$, instead of solely aiming at improving the previous cost, sPI performs a linear combination between the controller gain that a standard PI improvement step would give and the previous controller gain, which is guaranteed to be in $\mathcal{K}_1$. We use parameter $\alpha_j$ to define this linear combination, whose value is selected to ensure that the newly obtained controller belongs to $\mathcal{K}_1$.  
After this update of the gain, we find the classical evaluation step in \eqref{sPI.3} of PI to generate  matrix $P_j$. We further elaborate on the link between sPI and PI as well as other related algorithms in Remark \ref{rem:link-other-algos} below.

The next theorem ensures the recursive feasibility\footnote{ See \cite{granzotto2024robust} for further discussions on the recursive feasibility of the original PI algorithm.} of sPI in the sense that we can always find $\alpha_j$ and $K_j$ at the improvement step at any iteration. The theorem also recovers one of the main properties of PI: the obtained cost is improved at any iteration. Obviously, we cannot guarantee that the sequence of $P_j$ converges to $P_\infty$ as $K_\gamma$ does not a priori belong to $\mathcal{K}_1$, while the generated sequence of $K_j$ do.

\begin{thm} \label{th: sPI}
Given $\gamma\in [0,1]$ and $K_0 \in {\cal K}_1$, Algorithm \ref{alg: sPI} verifies the following properties.
\begin{enumerate}
    \item[(i)] \emph{(Recursive feasibility)} For any $j\in \Z_{>0}$, there always exist $\alpha_j \in (0,1]$ and $K_j\in \mathcal{K}_1$ verifying \eqref{sPI.2} and \eqref{sPI.3}. 
    \item[(ii)] \emph{(Recursive cost improvement)} For any $j\in \Z_{>0}$, we have $P_{j} \preceq P_{j-1}$, implying that the cost difference satisfies 
    \begin{equation} \label{eq: DeltaJ}
     J_{\gamma}(x_0, K_{j} ) - J_{\gamma}(x_0, K_{j-1}) \leq 0.
    \end{equation}
    for any $x_0\in \R^{n}$.\hfill $\Box$
\end{enumerate}\end{thm}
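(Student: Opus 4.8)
The plan is to mimic the classical policy-iteration monotonicity argument, adapted to the damping by $\alpha_j$. Fix $\gamma\in[0,1]$ and $K_0\in\mathcal{K}_1$. The first thing I would record is that $\mathcal{K}_1\subset\mathcal{K}_\gamma$, so any $K\in\mathcal{K}_1$ makes $\sqrt{\gamma}(A+BK)$ Schur; since moreover $Q+K^\top RK\succ0$, the discrete Lyapunov equations \eqref{sPI.1} and \eqref{sPI.3} admit a unique symmetric positive definite solution (as in \cite[Theorem~8]{Kucera72b}), and expanding the closed-loop cost shows this solution $P$ satisfies $J_\gamma(x,K)=x^\top Px$ for every $x\in\R^n$. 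It is convenient to introduce $\mathcal{T}_K(P):=\gamma(A+BK)^\top P(A+BK)+K^\top RK+Q$, so the evaluation step reads $P_j=\mathcal{T}_{K_j}(P_j)$, and to write $\widetilde{K}_{j-1}:=-\gamma(R+\gamma B^\top P_{j-1}B)^{-1}B^\top P_{j-1}A$ for the greedy gain appearing in \eqref{sPI.2}, so that $K_j=\alpha_j\widetilde{K}_{j-1}+(1-\alpha_j)K_{j-1}$.

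For item (i) I would argue by induction on $j$. The map $K\mapsto\rho(A+BK)$ is continuous, hence $\mathcal{K}_1$ is open. Assuming $K_{j-1}\in\mathcal{K}_1$ (the base case $j=1$ is the hypothesis, which also makes \eqref{sPI.1} well-posed), the affine map $\alpha\mapsto\alpha\widetilde{K}_{j-1}+(1-\alpha)K_{j-1}$ is continuous and equals $K_{j-1}\in\mathcal{K}_1$ at $\alpha=0$, so by openness there is $\bar\alpha_j>0$ with the whole segment $\alpha\in[0,\bar\alpha_j]$ landing in $\mathcal{K}_1$; any choice $\alpha_j\in(0,\min\{\bar\alpha_j,1\}]$ yields $K_j\in\mathcal{K}_1$, after which \eqref{sPI.3} is well-posed by the observation above. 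This gives recursive feasibility.

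For item (ii) the heart of the matter is the inequality $\mathcal{T}_{K_j}(P_{j-1})\preceq P_{j-1}$. I would first note that for each $v\in\R^n$ the scalar map $K\mapsto v^\top\mathcal{T}_K(P_{j-1})v=\gamma\norm{P_{j-1}^{1/2}(A+BK)v}^2+\norm{R^{1/2}Kv}^2+v^\top Qv$ is convex in $K$, being a sum of squared norms of affine functions of $K$; since this holds for all $v$, convexity transfers to the Löwner order and
\[
\mathcal{T}_{K_j}(P_{j-1})\preceq\alpha_j\,\mathcal{T}_{\widetilde{K}_{j-1}}(P_{j-1})+(1-\alpha_j)\,\mathcal{T}_{K_{j-1}}(P_{j-1}).
\]
Completing the square in $K$ shows that $\widetilde{K}_{j-1}$ simultaneously minimizes all the forms $v^\top\mathcal{T}_K(P_{j-1})v$, so $\mathcal{T}_{\widetilde{K}_{j-1}}(P_{j-1})\preceq\mathcal{T}_{K_{j-1}}(P_{j-1})$, while the evaluation equation at step $j-1$ (respectively \eqref{sPI.1} for $j=1$) gives $\mathcal{T}_{K_{j-1}}(P_{j-1})=P_{j-1}$; substituting yields $\mathcal{T}_{K_j}(P_{j-1})\preceq P_{j-1}$. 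Setting $N_j:=P_{j-1}-\mathcal{T}_{K_j}(P_{j-1})\succeq0$ and $\Delta_j:=P_{j-1}-P_j$ and subtracting $P_j=\mathcal{T}_{K_j}(P_j)$ from $P_{j-1}=\mathcal{T}_{K_j}(P_{j-1})+N_j$, the $K_j^\top RK_j+Q$ terms cancel and I obtain the Lyapunov identity
\[
\Delta_j=\gamma(A+BK_j)^\top\Delta_j(A+BK_j)+N_j.
\]
Since $\sqrt{\gamma}(A+BK_j)$ is Schur, this has the unique solution $\Delta_j=\sum_{i\ge0}\gamma^i((A+BK_j)^\top)^iN_j(A+BK_j)^i\succeq0$, i.e.\ $P_j\preceq P_{j-1}$; evaluating at $x_0$ gives $J_\gamma(x_0,K_j)=x_0^\top P_jx_0\le x_0^\top P_{j-1}x_0=J_\gamma(x_0,K_{j-1})$, which is \eqref{eq: DeltaJ}.

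I expect the only nonroutine step to be the inequality $\mathcal{T}_{K_j}(P_{j-1})\preceq P_{j-1}$: because $K_j$ is merely a convex combination of the greedy gain and the previous gain rather than an exact greedy step, one cannot invoke minimality of the greedy gain alone and must instead exploit the matrix convexity of $K\mapsto\mathcal{T}_K(P_{j-1})$ together with the fact that the previous evaluation equation makes $P_{j-1}$ a fixed point of $\mathcal{T}_{K_{j-1}}$. Everything else — well-posedness of the Lyapunov equations, the cost interpretation of $P_j$, and the final sign argument — is bookkeeping.
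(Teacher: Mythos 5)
Your proof is correct. Part (i) is essentially the paper's own argument: continuity of the spectral radius along the segment $\alpha\mapsto\alpha\widetilde{K}_{j-1}+(1-\alpha)K_{j-1}$, which starts at $K_{j-1}\in\mathcal{K}_1$, an open set. For part (ii) you take a genuinely different route. The paper follows Makila's trace formula for the cost difference between two stabilizing gains: it writes $J_{\gamma}(x_0,K)=\mathrm{trace}\big(S\,(Q+K^{\top}RK)\big)$ with $S\succeq0$ solving the dual Lyapunov equation driven by $x_0x_0^{\top}$, substitutes $\Delta K_j=-\alpha_jM_{j-1}$, and lands on the closed form $\Delta J_{\gamma}=\alpha_j(\alpha_j-2)\,\mathrm{trace}\big(M_{j-1}^{\top}(\gamma B^{\top}P_{j-1}B+R)M_{j-1}S_j\big)\le0$, from which $P_j\preceq P_{j-1}$ is read off since $x_0$ is arbitrary. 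You instead prove the Löwner inequality $\mathcal{T}_{K_j}(P_{j-1})\preceq P_{j-1}$ directly, by combining matrix convexity of $K\mapsto\mathcal{T}_K(P_{j-1})$ with the minimality of the greedy gain and the fixed-point identity $\mathcal{T}_{K_{j-1}}(P_{j-1})=P_{j-1}$, and then conclude with the standard Lyapunov comparison argument. Both are sound; your Hewer-style order-theoretic argument isolates the conceptual reason the damped update still improves (convexity of the policy-evaluation operator in the gain), and avoids the Gramian bookkeeping, while the paper's explicit computation yields a quantitative expression for the decrease and incidentally shows that improvement persists for any $\alpha_j\in(0,2)$, slightly beyond the range $(0,1]$ to which your convex-combination argument is confined.
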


\begin{proof}
(i) Let $\gamma\in [0,1]$. As $K_0\in\mathcal{K}_1$, there always exists $\alpha_1\in(0,1]$ sufficiently small such that $\rho(A+BK_1)<1$ with $K_1$ as in (\ref{sPI.2}) by  continuity of the eigenvalues of $A+BK_1$ with respect to $\alpha_1$. The same reasoning applies to any $j\in\Z_{>0}$ thereby ensuring that item (i) of Theorem \ref{th: sPI} holds.

(ii) Let $\gamma\in[0,1]$, $x_0\in\R^{n}$, $j\in\Z_{>0}$ and  $\Delta K_j :=K_{j}-K_{j-1}$. From (\ref{sPI.2}), we have $\Delta K_j = -\alpha_j M_{j-1}$
with  
\begin{equation}
M_{j-1} =   \gamma ( R+\gamma B^\top P_{j-1}B)^{-1} B^\top P_{j-1} A + K_{j-1}.  \end{equation}
We aim at showing that  
\begin{equation}
\Delta J_{\gamma}(x_0,K_{j},K_{j-1}):=J_{\gamma}(x_0, K_{j} ) - J_{\gamma}(x_0, K_{j-1})\leq 0.
\end{equation}
We note (\ref{sPI.3}) 
ensures that $K_{j-1} \in {\cal K}_{\gamma}$ and  $J_{\gamma}(x_0, K_{j-1} ) = x_0^{\top} P_{j-1}x_0=\text{trace}(x_0^{\top} P_{j-1}x_0)$. By cyclicity of the trace, we have 
\begin{equation*}\label{eq:proof-sPI-J-Kj-1}
\begin{array}{rlll}    
J_{\gamma}(x_0, K_{j-1} ) & = &  \text{trace}(P_{j-1}x_0x_0^{\top}) \\
&=& \text{trace}\Big(P_{j-1} \big( S_{j-1}-\sqrt{\gamma}(A+BK_{j-1})\\&& S_{j-1}\sqrt{\gamma}(A+BK_{j-1})^\top\big) \Big),
\end{array}
\end{equation*} 
with $S_{j-1} \succeq 0$ solution to  
$\sqrt{\gamma}(A+BK_{j-1}) S_{j-1}\sqrt{\gamma}(A+BK_{j-1})^\top  - S_{j-1} +x_0x_0^{\top} = 0$. Using once again the cyclicity and the linearity of the trace function we have
\begin{align}
    J_{\gamma}(x_0, K_{j-1} ) = & \, \text{trace}\big(S_{j-1}P_{j-1}\big)\nonumber\\
    & - \text{trace}\big(S_{j-1}\sqrt{\gamma}(A+BK_{j-1})^{\top}P_{j-1}\times\nonumber\\
    & \sqrt{\gamma}(A+BK_{j-1})\big)\nonumber
\end{align} 
As $P_{j-1}=\sqrt{\gamma}(A+BK_{j-1})^{\top}P_{j-1}\sqrt{\gamma}(A+BK_{j-1})+K_{j-1}^{\top}RK_{j-1}+Q$ by (\ref{sPI.3}), we derive
\begin{align}
    J_{\gamma}(x_0, K_{j-1} ) = & \, \text{trace}\big(
    S_{j-1}\sqrt{\gamma}(A+BK_{j-1})^{\top}P_{j-1}\times \nonumber\\
    &\sqrt{\gamma}(A+BK_{j-1})\big)\nonumber\\
    &+\text{trace}\big(S_{j-1}(Q+K_{j-1}^{\top}RK_{j-1})\big)\nonumber\\
    &- \text{trace}\big(S_{j-1}\sqrt{\gamma}(A+BK_{j-1})^{\top}P_{j-1}\times\nonumber\\
    & \sqrt{\gamma}(A+BK_{j-1})\big)\nonumber\\
    =&\,\text{trace}\big(S_{j-1}(Q+K_{j-1}^{\top}RK_{j-1})\big). \label{eq:proof-sPI-J-Kj-1}
\end{align} 

We similarly derive from (\ref{sPI.3}) that $K_{j} \in {\cal K}_{\gamma}$ and  the associated cost  $J_{\gamma}(x_0, K_{j} ) = x_0^{\top} P_{j}x_0$ verifies
\begin{align}
J_{\gamma}(x_0, K_{j} ) & =   \text{trace}(P_{j}x_0x_0^{\top}) \nonumber\\
 & =   \text{trace}(S_{j} (Q + K_{j}^{\top}RK_{j} ) ) \label{eq:proof-sPI-J-Kj}
\end{align}
with $S_{j} \succeq 0$ solution to 
$   \sqrt{\gamma}(A+BK_{j})S_{j}\sqrt{\gamma}(A+BK_{j})^\top  - S_{j} +x_0x_0^{\top} = 0 
$. 
By (\ref{eq:proof-sPI-J-Kj-1}) and (\ref{eq:proof-sPI-J-Kj}), we derive like in  \cite[(11)]{Makila87} that
 \begin{multline*}
\Delta J_{\gamma}(x_0,K_{j},K_{j-1}) = \text{trace}\Big(2 \Delta K_{j}^\top \big( (\gamma B^\top P_{j-1}B+R) \times  \\ 
K_{j-1} S_{j} + \gamma B^\top P_{j-1} A S_{j} \big)+\Delta K_j^\top (\gamma B^\top P_{j-1}B+R) \Delta K_j S_{j} \Big) 
\end{multline*}
By substituting $\Delta K_j = -\alpha_j M_{j-1}$,
and using the relation
$$
\gamma B^\top P_{j-1} A =  ( \gamma B^\top P_{j}B+R) M_{j-1} -  ( \gamma B^\top P_{j-1}B+R) K_{j-1}
$$
we arrive at the next expression
\begin{align*}
\MoveEqLeft \Delta J_{\gamma}(x_0,K_{j},K_{j-1}) & \\ &{}=\text{trace}\big(-2 \alpha_j M_{j-1}^\top (\gamma B^\top P_{j-1}B+R)M_{j-1}S_{j}  \\
 & \quad {}+ \alpha_j^2 M_{j-1}^\top (\gamma B^\top P_{j-1}B+R) M_{j-1} S_{j} \big) \\
& {}= \alpha_j(\alpha_j-2) \text{trace}\big( M_{j-1}^\top (\gamma B^\top P_{j-1}B+R) M_{j-1} S_{j} \big).
\end{align*}
We deduce that $\Delta J_{\gamma}(x_0,K_{j},K_{j-1}) \leq 0$ as $\alpha_j \in (0, 1]$, together with the symmetry and positive semi-definiteness of $S_{j}$ and $\gamma B^\top P_{j-1}B+R$. We have proved (\ref{eq: DeltaJ}) for any arbitrary $j\in\Z_{>0}$ and $x_0\in\R^{n}$.  Consequently, $P_{j}\preceq P_{j-1}$,  which concludes the proof.
\end{proof}

Theorem \ref{th: sPI} guarantees that the controller gain obtained at each iteration stabilizes (\ref{eq:sysd}), and that its cost cannot be bigger than the cost of the gain obtained at the previous iteration. A relevant way to initialize sPI is to take $K_0\in\{\widehat{K}_\gamma,\overline{K}_\gamma\}$ with $\widehat{K}_\gamma,\overline{K}_\gamma$ solution of \eqref{optim3} and \eqref{optimSTbar}, respectively. In this way, sPI can only provide ``better'' controller gains belonging to $\mathcal{K}_1$.

\begin{rem}\label{rem:link-other-algos} We briefly elaborate on the link between sPI and existing PI(-like) algorithms. First, when, at any iteration $j\in \Z_{\ge0}$ we can take   $\alpha_j=1$, we recover the standard iteration of the original PI  \cite{Hewer71}. Second, 
sPI shares similarities with policy gradient methods. Indeed, by adapting \cite[(7)]{Fazel_2018} to the discounted setting, \eqref{sPI.2} can be interpreted as an update of a Gauss-Newton method
\begin{equation*}
K_{j+1}=(1-2\eta)K_j+2\eta(-\gamma(R+\gamma B^{\top}RB)^{-1}B^{\top}P_jA)
\end{equation*} 
with $2\eta=\alpha_j$. Compared to \cite[(7)]{Fazel_2018}, in sPI: (i) the weights $\alpha_j$ are allowed to change at any iteration instead of being fixed; (ii) we impose $K_j$ to belong to $\mathcal{K}_1$ at each iteration to recursively obtain a stabilizing gain. Our approach also differs  from \cite{zhao2021}, which relies on \eqref{eq:LyapKRK} and incorporates an updating mechanism for the discount factor to ensure that the gradient descent process yields a stabilizing policy, while sPI works for a fixed discount factor. 
\hfill $\Box$
\end{rem}

\begin{rem}\label{rem:stopping-criterion}
A relevant way to implement $\alpha_j$ is to maximize its value at each iteration. Then, a possible stopping criterion for the algorithm when $K_{\gamma}\notin\mathcal{K}_1$ is, given a  threshold $\epsilon>0$, to stop iterating when   there exists $j\in\Z_{>0}$ such that $\alpha_{j}<\epsilon$.~\hfill $\Box$
\end{rem}

\begin{revexample}
Let  $\gamma=0.1$, we have that $K_{\gamma}\notin\mathcal{K}_1$, see Example \ref{Ex: 1}. We begin with the initial stabilizing gain $K_0:=\widehat{K}_{\gamma}\in \mathcal{K}_1$, where $\widehat{K}_{\gamma}$ is the solution to \eqref{optim3} with $x_0=\begin{bmatrix}
    1&1
\end{bmatrix}^{\top}$. At any iteration $j\in\Z_{>0}$,  we discretize the interval $[0,1]$ using a grid of length $10^{-5}$ and we denote by $\widebar{\alpha}_j$  the maximal value $\alpha_j$ in this  grid such that the improvement step in Algorithm \ref{alg: sPI} is feasible. To illustrate sPI  over a large number of iterations, we select $\alpha_j=\tfrac{\overline{\alpha}_j}{10}$. 
Since the optimal control gain is not stabilizing for this value of $\gamma$, we set the stopping criterion to $\alpha_j<10^{-5}$, see Remark \ref{rem:stopping-criterion}. The evolution of the eigenvalues of $A+BK_{j}$ over the iterations $j$ is presented in Fig.~\ref{fig:sPI_eigen} illustrating the fact that they lie inside the unit circle at any iteration as desired. 

\begin{figure}[ht]
    \centering
    \includegraphics[width=0.35\textwidth]{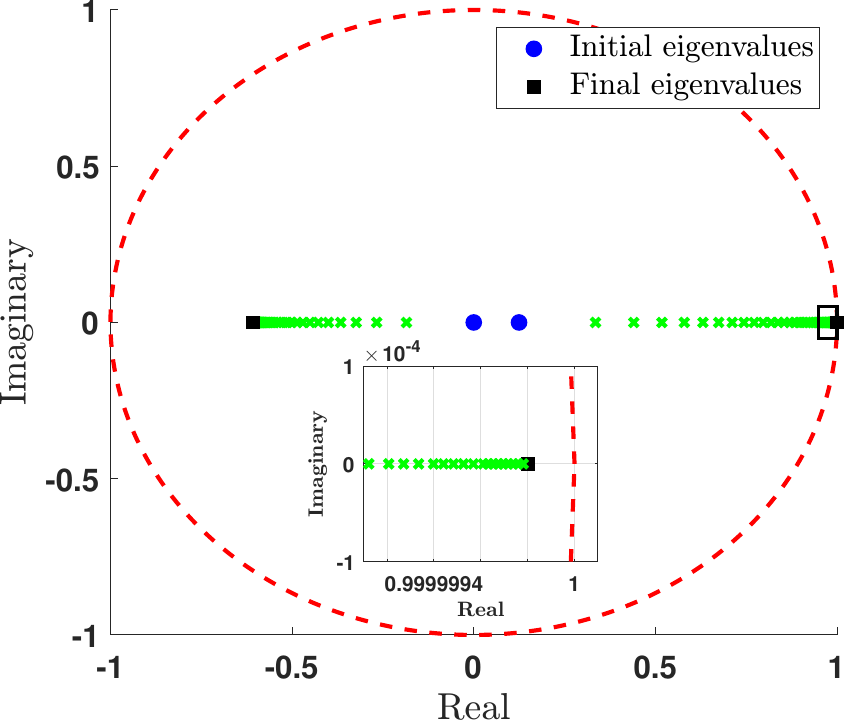}
    \caption{Unit disk (dashed red curve) and eigenvalues of $A+BK_{j}$ in the complex plane obtained with sPI (green crosses), with a zoomed-in view near the region where the real parts are close to 1, with $\gamma=0.1$ and $K_0=\widehat{K}_{\gamma}=\begin{smallmatrix}
        [-0.0081&-0.1409]
    \end{smallmatrix}$.}
    \label{fig:sPI_eigen}
\end{figure}
For any $x_0\in \R^{n}$ and any iteration $j\in\Z_{\ge0}$, the cost induced by $K_j$ is given by $x_0^\top P_jx_0$. Hence, the closer $P_j$ is to $P_{\gamma}$ as in (\ref{eq:optimal-value-function}), the closer the cost induced by $K_j$ is to the one induced by $K_\gamma$ for any $x_0\in \R^{n}$. Fig.~\ref{fig:sPI_cost} shows  $\norm{P_j-P_{\gamma}}$ over the iterations $j$. We observe that $\norm{P_j-P_{\gamma}}$ decreases with $j$ consistently with item (ii) of Theorem \ref{th: sPI}. Although $\norm{P_j-P_{\gamma}}$ approaches zero after $40$ iterations, the fact that $K_{\gamma}\notin \mathcal{K}_1$, implies that a zero error cannot be asymptotically reached. To  illustrate this, we provide a zoomed-in view for $j\ge100$, revealing a residual error on the order of $10^{-3}$.

\begin{figure}[ht]
    \centering
    \includegraphics[width=0.32\textwidth]{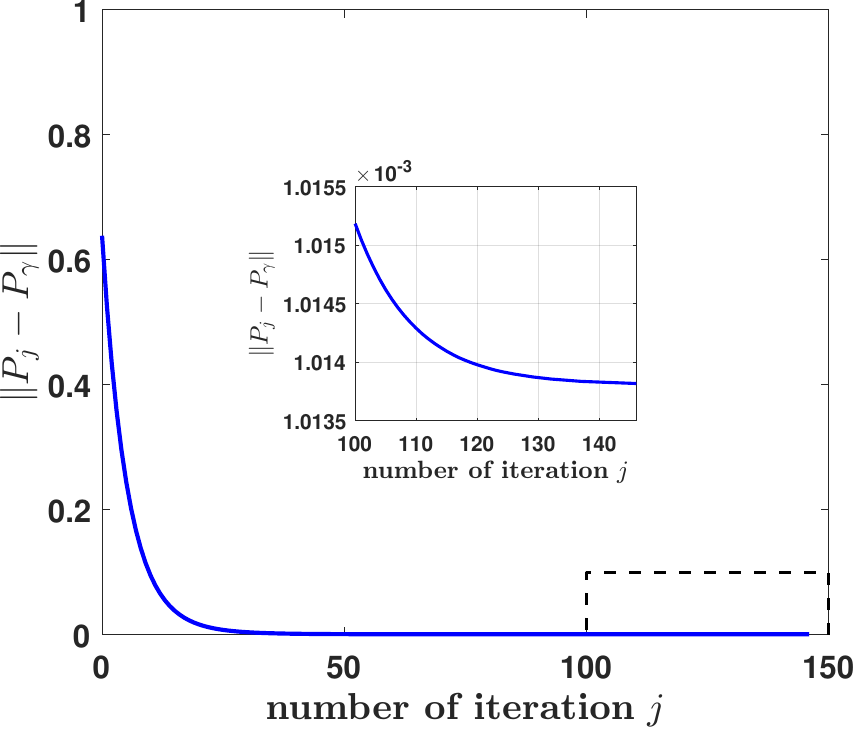}
    \caption{Norm  of $P_j - P_{\gamma}$ as a function of the number of iterations $j$ for $\gamma = 0.1$ and $K_0 = \begin{smallmatrix} [-0.0081 & -0.1409] \end{smallmatrix}$, with a zoomed-in view for $j > 100$.}
    \label{fig:sPI_cost}
\end{figure}

\end{revexample}

\section{Conclusion}\label{sect:conclusions}
We have presented novel stability (necessary and) sufficient conditions involving the optimal value function for the discounted LQR. 
Moreover, when the optimal controller fails to stabilize the system, we have given  methods to design near-optimal stabilizing state feedback laws both by solving convex optimization problems with LMI-based constraints and with a variant of policy iteration. The considered numerical examples show that the proposed approach may allow designing stabilizing controller without significantly degrading the optimal performance.

This work allows for the exploration of various research directions. First,  we plan to quantify the near-optimality bound in terms of  mismatch between the cost induced by the near-optimal controllers and the optimal value function $V^{\star}_\gamma$. Second, it would be interesting to investigate more general near-optimal controllers than static state-feedback laws as done in this work, as pertinently brought to our attention by Leonid Mirkin. Third, it would be  relevant to exploit the findings of Section \ref{sec: Lyap analysis} in a learning context similar to \cite{Tyler22, LAI2023}.

\bibliographystyle{IEEEtran}
\bibliography{H2_Discount}

\begin{thebibliography}{10}
\providecommand{\url}[1]{#1}
\csname url@rmstyle\endcsname
\providecommand{\newblock}{\relax}
\providecommand{\bibinfo}[2]{#2}
\providecommand\BIBentrySTDinterwordspacing{\spaceskip=0pt\relax}
\providecommand\BIBentryALTinterwordstretchfactor{4}
\providecommand\BIBentryALTinterwordspacing{\spaceskip=\fontdimen2\font plus
\BIBentryALTinterwordstretchfactor\fontdimen3\font minus \fontdimen4\font\relax}
\providecommand\BIBforeignlanguage[2]{{%
\expandafter\ifx\csname l@#1\endcsname\relax
\typeout{** WARNING: IEEEtran.bst: No hyphenation pattern has been}%
\typeout{** loaded for the language `#1'. Using the pattern for}%
\typeout{** the default language instead.}%
\else
\language=\csname l@#1\endcsname
\fi
#2}}

\bibitem{Anderson-Moore-book}
B.~Anderson and J.~Moore, \emph{Optimal Control: Linear Quadratic Methods}.\hskip 1em plus 0.5em minus 0.4em\relax Dover edition, Mineola, U.S.A., 2007.

\bibitem{Bertsekas-book12(adp)}
D.~P. Bertsekas, \emph{Dynamic Programming and Optimal Control}, 4th~ed.\hskip 1em plus 0.5em minus 0.4em\relax Athena Scientific, Belmont, U.S.A., 2012, vol.~2.

\bibitem{Kucera72b}
V.~Kucera, ``{The discrete Riccati equation of optimal control},'' \emph{Kybernitika}, vol.~8, no.~5, pp. 430--447, 1972.

\bibitem{book-lewis-12}
F.~Lewis, D.~Vrabie, and V.~Syrmos, \emph{Optimal Control}, 3rd~ed.\hskip 1em plus 0.5em minus 0.4em\relax John Wiley \& Sons, New-Jersey, U.S.A, 2012.

\bibitem{Postoyan17}
R.~Postoyan, L.~Busoniu, D.~Ne{\v{s}}i{\'c}, and J.~Daafouz, ``Stability analysis of discrete-time infinite-horizon optimal control with discounted cost,'' \emph{IEEE Transactions on Automatic Control}, vol.~62, no.~6, pp. 2736--2749, 2017.

\bibitem{GAIT2018}
V.~Gaitsgory, L.~Grüne, M.~Höger, C.~M. Kellett, and S.~R. Weller, ``Stabilization of strictly dissipative discrete time systems with discounted optimal control,'' \emph{Automatica}, vol.~93, pp. 311--320, 2018.

\bibitem{granzotto-tac24(stochastic)}
M.~Granzotto, R.~Postoyan, D.~Ne{\v{s}}i{\'c}, and A.~R. Teel, ``Stability analysis of stochastic optimal control: The linear discounted quadratic case,'' \emph{IEEE Transactions on Automatic Control}, 2024.

\bibitem{perdomo2021}
J.~C. Perdomo, J.~Umenberger, and M.~Simchowitz, ``Stabilizing dynamical systems via policy gradient methods,'' in \emph{Advances in Neural Information Processing Systems}, A.~Beygelzimer, Y.~Dauphin, P.~Liang, and J.~W. Vaughan, Eds., 2021.

\bibitem{Agazzi2020}
\BIBentryALTinterwordspacing
A.~Agazzi and C.~X. Chen, ``{Policy gradient convergence in discounted LQR},'' 2020. [Online]. Available: \url{https://api.semanticscholar.org/CorpusID:227106893}
\BIBentrySTDinterwordspacing

\bibitem{Lamperski_2020}
A.~Lamperski, ``Computing stabilizing linear controllers via policy iteration,'' in \emph{IEEE Conference on Decision and Control, Jeju Island: South Korea}, 2020, pp. 1902--1907.

\bibitem{zhao2021}
\BIBentryALTinterwordspacing
F.~Zhao, X.~Fu, and K.~You, ``Learning stabilizing controllers of linear systems via discount policy gradient,'' 2021. [Online]. Available: \url{https://arxiv.org/abs/2112.09294}
\BIBentrySTDinterwordspacing

\bibitem{Tyler22}
T.~Westenbroek, F.~Castaneda, A.~Agrawal, S.~Sastry, and K.~Sreenath, ``Lyapunov design for robust and efficient robotic reinforcement learning,'' \emph{Conference on Robot Learning, Auckland, New Zealand}, 2022.

\bibitem{LAI2023}
J.~Lai, J.~Xiong, and Z.~Shu, ``Model-free optimal control of discrete-time systems with additive and multiplicative noises,'' \emph{Automatica}, vol. 147, p. 110685, 2023.

\bibitem{Grimm-et-al-tac2005}
G.~Grimm, M.~Messina, S.~Tuna, and A.~Teel, ``{Model predictive control: for want of a local control Lyapunov function, all is not lost},'' \emph{IEEE Trans. on Automatic Control}, vol.~50, no.~5, pp. 546--558, 2005.

\bibitem{Boyd94}
S.~Boyd, L.~E. Ghaoui, E.~Feron, and V.~Balakrishnan, ``Linear matrix inequalities in system and control theory,'' \emph{Volume 15 of Studies in Applied Mathematics, SIAM}, 1994.

\bibitem{van-waarde-mesbahi-ifac2020data}
H.~J. {van Waarde} and M.~Mesbahi, ``{Data-driven parameterizations of suboptimal LQR and H2 controller},'' \emph{IFAC-PapersOnLine}, vol.~53, no.~2, pp. 4234--4239, 2020.

\bibitem{Fazel_2018}
M.~Fazel, R.~Ge, S.~Kakade, and M.~Mesbahi, ``Global convergence of policy gradient methods for the linear quadratic regulator,'' in \emph{International Conference on Machine Learning, Stockholm, Sweden}, 2018, pp. 1467--1476.

\bibitem{Hespanha-book-2009}
J.~P. Hespanha, \emph{Linear Systems Theory}.\hskip 1em plus 0.5em minus 0.4em\relax Princeton University Press, Princeton, U.S.A., 2009.

\bibitem{granzotto2024robust}
M.~Granzotto, O.~L. De~Silva, R.~Postoyan, D.~Ne{\v{s}}i{\'c}, and Z.-P. Jiang, ``{Robust stability and near-optimality for policy iteration: For want of recursive feasibility, all is not lost},'' \emph{IEEE Transactions on Automatic Control}, vol.~69, no.~12, pp. 8247 -- 8262, 2024.

\bibitem{Makila87}
P.~Makila and H.~Toivonen, ``{Computational methods for parametric LQ problems--A survey},'' \emph{IEEE Transactions on Automatic Control}, vol.~32, no.~8, pp. 658--671, 1987.

\bibitem{Hewer71}
G.~A. Hewer, ``An iterative technique for the computation of steady state gains for the discrete optimal regulator,'' \emph{IEEE Transactions on Automatic Control}, vol.~16, no.~4, pp. 382--384, 1971.

\end{thebibliography}



\end{document}